\theoremstyle{plain}
\newtheorem{theorem}{Theorem}[section]
\newtheorem*{theorem*}{Theorem}
\newtheorem{prop}[theorem]{Proposition}
\newtheorem{lemma}[theorem]{Lemma}
\newtheorem{comp}[theorem]{Computation}
\newtheorem{theoremintro}{Theorem}
\newtheorem{problemintro}{Problem}
\theoremstyle{definition}
\newtheorem{algorithm}[theorem]{Algorithm}
\theoremstyle{remark}
\newtheorem{remark}[theorem]{Remark}
\newcommand{\CH}{\mathrm{CH}}
\newcommand{\Sym}{\mathrm{Sym}}
\newcommand{\isom}{\cong}
\newcommand{\Aut}{\mathrm{Aut}}
\DeclareMathOperator{\rk}{rk}
\newcommand{\sing}{\mathrm{sing}}
\newcommand{\sm}{\mathrm{sm}}
\renewcommand{\P}{\mathbb{P}}
\newcommand{\Z}{\mathbb{Z}}
\newcommand{\Q}{\mathbb{Q}}
\newcommand{\C}{\mathbb{C}}
\newcommand{\F}{\mathbb{F}}
\newcommand{\Group}[1]{\mathrm{#1}}
\newcommand{\GL}{\Group{GL}}
\newcommand{\PGL}{\Group{PGL}}
\newcommand{\Gm}{\mathbb{G}_m}
\DeclarePairedDelimiter\abs{\lvert}{\rvert}
\newcommand{\inv}{^{-1}}
\renewcommand{\:}{\colon}
\newcommand{\wtilde}{\widetilde}
\newcommand{\numberlist}[2][0.9\linewidth]{%
  \parbox[t]{#1}{\printcommalist{#2}}%
}
\newcommand{\printcommalist}[1]{%
  \begingroup\lccode`~=`,\lowercase{\endgroup\def~}{\mathcomma\penalty0 }%
  \mathcode`,="8000
  \thinmuskip=6mu plus 6mu minus 2mu
  $#1$
}
\mathchardef\mathcomma=\mathcode`,
\begin{document}

\title[A census of cubic fourfolds over $\F_2$]{A census of cubic fourfolds over $\F_2$} 

\author{Asher Auel}
\author{Avinash Kulkarni}
\author{Jack Petok}
\author{Jonah Weinbaum}

\address{Department of Mathematics, 
Dartmouth College, Hanover, New Hampshire}


\address{\texttt{\it E-mail
addresses: \tt asher.auel@dartmouth.edu}}
\address{\texttt{\it E-mail
address: \tt avinash.a.kulkarni@dartmouth.edu}}
\address{\texttt{\it E-mail
address: \tt jack.petok@dartmouth.edu}}
\address{\texttt{\it E-mail
address: \tt jonah.r.weinbaum.23@dartmouth.edu}}



\begin{abstract}
  We compute a complete set of isomorphism classes of cubic fourfolds
  over $\F_2$.  Using this, we are able to compile statistics about
  various invariants of cubic fourfolds, including their counts of
  points, lines, and planes; all zeta functions of the smooth cubic
  fourfolds over $\F_2$; and their Newton polygons.  One particular
  outcome is the number of smooth cubic fourfolds over $\F_2$, which
  we fit into the asymptotic framework of discriminant complements.
  Another motivation is the realization problem for zeta functions of
  $K3$ surfaces.  We present a refinement to the standard method
  of orbit enumeration that leverages filtrations and gives a
  significant speedup.  In the case of cubic fourfolds, the relevant
  filtration is determined by Waring representation and the method
  brings the problem into the computationally tractable range.
\end{abstract}

\maketitle

\vspace*{-.5cm}

\section*{Introduction}
\label{sec:introduction}

The study of cubic fourfolds over finite fields (e.g.,
\cite{addington_auel}, \cite{ABBV}, \cite{charles_pirutka},
\cite{debarre_laface_roulleau}, etc.) has grown as a respectable side
industry supporting the main threads of investigation---the
rationality problem and its connections to derived categories,
algebraic cycles, K3 surfaces, and hyperk\"ahler varieties---of
complex cubic fourfolds. In this paper and its accompanying
code~\cite{github},
we generate a database of all cubic fourfolds over $\F_2$ up to
isomorphism. We also compute many of their most important invariants,
including their automorphism groups, their point counts, and
information about their algebraic cycles.  In particular, we can
report the following.

\begin{theoremintro}
\label{thm:smooth}
Of the $3\, 718\, 649$ isomorphism classes of cubic fourfolds over
$\F_2$, exactly $1\, 069\, 562$ are smooth, of which $533\, 262$ are
ordinary, $8688$ are supersingular, $107\, 552$ are Noether--Lefschetz
general, and $702\, 153$ contain a plane.  The smooth cubic fourfolds
admit $86\, 472$ distinct zeta functions.
\end{theoremintro}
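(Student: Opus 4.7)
The plan is an exhaustive computer classification. A cubic fourfold over $\F_2$ is a hypersurface $V(F) \subset \P^5$ cut out by a nonzero cubic form $F$ on $\F_2^6$, and two such forms define isomorphic fourfolds exactly when they lie in a common $\PGL_6(\F_2)$-orbit. The space of cubic forms has dimension $\binom{8}{3} = 56$, so there are $2^{56} - 1$ nonzero forms, and an orbit enumeration of this set is the crux of the theorem. Direct brute force is computationally infeasible, so the first step is to run the refined enumeration procedure that is the paper's main algorithmic contribution: it stratifies forms by an invariant read off from Waring representation and enumerates orbits stratum by stratum. Counting the resulting canonical representatives produces the first number $3{,}718{,}649$.

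The remaining counts are obtained by looping over the orbit list and applying a test to each representative $F$. \emph{Smoothness} is tested by checking that the Jacobian ideal $(\partial_0 F, \ldots, \partial_5 F)$ has empty vanishing locus in $\P^5_{\overline{\F}_2}$, for instance by Gr\"obner basis or by enumerating low-degree points. \emph{Containing a plane} is tested by iterating over the $\F_2$-points of the Grassmannian $\mathrm{Gr}(3,6)$ and checking whether $F$ restricts to zero on the corresponding $\Lambda \subset \P^5$. For the smooth representatives, the \emph{zeta function} is reconstructed from point counts $\#X(\F_{2^n})$ for $n$ up to roughly $\lceil 23/2 \rceil$: the Betti numbers $(1, 0, 1, 0, 23, 0, 1, 0, 1)$ of a smooth cubic fourfold are standard, so the Lefschetz trace formula, combined with the functional equation and the Weil bound, determines the characteristic polynomial of Frobenius on $\H^4$ from these counts. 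Tallying distinct outputs yields the $86{,}472$ zeta functions.

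The \emph{Newton polygon} of crystalline $\H^4$ is then read off from the $2$-adic valuations of the reciprocal roots of the Frobenius polynomial, normalized by the Weil weight; \emph{ordinary} means that the Newton polygon matches the Hodge polygon with slopes $(1, 2^{21}, 3)$, and \emph{supersingular} means all slopes equal to $2$. A representative is \emph{Noether--Lefschetz general} exactly when the $\overline{\F}_2$-algebraic classes in middle cohomology are spanned by $h^2$; this is detected on the Frobenius side by the multiplicity of $q^2 = 4$ among the roots of the characteristic polynomial, which for an NL-general fourfold equals $1$ (the contribution of $h^2$ itself). Summing the indicator of each test over the orbit list produces the remaining four counts.

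The main obstacle is the orbit enumeration. With $2^{56} \approx 7 \cdot 10^{16}$ forms and $\abs{\PGL_6(\F_2)} \approx 2 \cdot 10^{10}$, neither an orbit-by-orbit traversal nor a pointwise orbit action is feasible on its own; it is the Waring-filtration refinement developed later in the paper that reduces the enumeration to a tractable problem. Once representatives are in hand, each of the per-representative tests above is computationally inexpensive and trivially parallelizes across the orbit list, so the remaining statistics are a matter of bookkeeping.
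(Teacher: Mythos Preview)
Your overall plan matches the paper's: enumerate $\GL_6(\F_2)$-orbits via the Waring filtration, then loop over representatives. But your Noether--Lefschetz criterion is wrong. You correctly define NL-general as $\CH^2(\overline{X}) \otimes \Q = \Q h^2$, and then claim this is detected by the multiplicity of $q^2=4$ as a Frobenius eigenvalue being~$1$. That multiplicity measures the \emph{arithmetic} rank $\rk \CH^2(X)$: a codimension-$2$ class defined only over an extension of $\F_2$ contributes an eigenvalue $q^2 \zeta$ for a nontrivial root of unity~$\zeta$, not $q^2$ itself. The correct test (granting the Tate conjecture, which the paper proves for cubic fourfolds in characteristic~$2$ as Theorem~\ref{thm:tateconj}) is that the product of all cyclotomic factors of the Tate-twisted Weil polynomial $P_X(t)$ has degree~$1$. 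Your test would return the $232{,}218$ cubics of arithmetic rank~$1$, not the $107{,}552$ of geometric rank~$1$.

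You also understate two computational steps that the paper handles explicitly. Eleven point counts plus the functional equation leave the \emph{sign} of the functional equation undetermined, and the Weil bound does not always resolve it; the paper instead invokes a theorem of Saito expressing the sign via the divided discriminant of an integral lift of the defining cubic. And computing $\#X(\F_{2^{11}})$ by evaluating $F$ over $\P^5(\F_{2^{11}})$ costs on the order of $2^{55}$ evaluations per cubic, which across $10^6$ cubics is far from ``computationally inexpensive''; the paper uses the Addington--Auel conic-fibration method, projecting from a rational line $\ell \subset X$ (which always exists) to reduce to an enumeration over $\P^3$, and then over $\P^2$ via a rational singular point of the discriminant quintic surface.
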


The algorithmic methods to generate our database of cubic fourfolds
are of independent interest: we present a new technique for
enumerating a complete set of orbit representatives of a finite group
$G$ acting linearly on a high-dimensional vector space $V$ over a
finite field that leverages $G$-stable filtrations of $V$.  In the
case of cubic fourfolds over $\F_2$, the relevant action is the
representation of $G=\GL_6(\F_2)$ on the $56$-dimensional
$\F_2$-vector space $V=\text{Sym}^3(\F_2^6)$ of homogeneous cubic
forms in six variables.  In certain situations, our method provides a
substantial speedup over complete orbit partition methods, such as
union-find.  The advantage of our method, assuming the existence of
good $G$-stable filtrations, is that we do not need to iterate through
every element of $V$.
A complexity analysis in \S\ref{ss:complexity} shows that under
favorable situations our method is linear in the number of orbits,
which is asymptotically optimal; in the case of cubic fourfolds over
$\F_2$, our method gives a roughly square-root speedup.

Our work on cubic fourfolds was partially inspired by Kedlaya and
Sutherland's census~\cite{kedlaya_sutherland-census} of quartic $K3$
surfaces over $\F_2$. There, a complete partition of quartic surfaces
into $\GL_4(\F_2)$-orbits was achieved in a few days on a powerful
computer; with our method, it takes $3$ minutes on a laptop to compute
a complete set of orbit representatives.  They also compute the zeta
functions of the smooth orbits, as well as a longer list of
potential zeta functions of $K3$ surfaces over $\F_2$.  This is
achieved by enumerating the candidate Weil polynomials on the middle
$\ell$-adic
cohomology~\cite[Computation~3(c)]{kedlaya_sutherland-census}.
Kedlaya and Sutherland pose the following.

\begin{problemintro}
\label{prop:HT}
Determine the set of zeta functions of $K3$ surfaces defined
over $\F_2$.
\end{problemintro}

We remark that the Tate conjecture for $K3$ surfaces (proved by
\cite{maulik}, \cite{charles1}, \cite{charles2}, \cite{madapusipera},
\cite{kim_madapusipera}, \cite{madapusipera-erratum},
\cite{ito_ito_koshikawa}), implies that there are finitely many
isomorphism classes of $K3$ surfaces defined over a fixed finite field
by the work of Lieblich, Maulik, and
Snowden~\cite{lieblich_maulik_snowden}, which holds in any
characteristic.  A resolution of Problem~\ref{prop:HT} would provide a
kind of Honda--Tate theory for $K3$ surfaces. The work of
Taelman~\cite{taelman} implies that the transcendental part of every
Weil polynomial in~\cite[Computation~3(c)]{kedlaya_sutherland-census}
is expected to arise from some $K3$ surface defined over a
\emph{suitable extension} of the base field, but we are interested in
which zeta functions arise from $K3$ surfaces over $\F_2$.

Over the complex numbers, cubic fourfolds are Fano varieties of {\it
$K3$ type}, with the Hodge structures on their middle cohomology
resembling those of $K3$ surfaces.  Hassett~\cite{hassett} classified
those cubic fourfolds that admit Hodge-theoretically {\it associated
K3 surfaces}, namely the {\it admissible} special cubic fourfolds.
Over a finite field, the Weil polynomial on the middle dimensional
$\ell$-adic cohomology of a special cubic fourfold has a factor (the nonspecial Weil polynomial) that looks like the Weil
polynomial of a $K3$ surface, and we would expect this polynomial to
be realizable by a $K3$ surface defined over $\F_2$ whenever a
Hodge-theoretically associated $K3$ surface is defined over $\F_2$.
Thus our computation of the zeta functions of cubic fourfolds
(see~\S\ref{sec:zeta}) provides many new Weil polynomials that should
arise from $K3$ surfaces, and fertile ground for the arithmetic study
of the associated $K3$ surface.  In cases where there is an explicit
algebraic construction of an associated $K3$ surface, for example, for
cubic fourfolds containing a plane, the nonspecial Weil polynomial of
the cubic is the primitive Weil polynomial of some $K3$ surface over
$\F_2$.  On the other hand, our census exhibits many explicit special
cubic fourfolds that cannot have an associated K3 surface because such
a K3 would have ``negative point counts'' as well as certain special
cubic fourfolds that are not expected to have associated $K3$ surfaces
over $\overline{\F}_2$, yet whose nonspecial Weil polynomial is still
contained on Kedlaya and Sutherland's list, raising further questions
about associated $K3$ surfaces over finite fields (see the forthcoming
work of the first and third authors~\cite{auel_petok}).
Future census projects could help further populate the list of Weil
polynomials that are realized by $K3$ surfaces over $\F_2$.  

Finally, as stated in Theorem~\ref{thm:smooth}, our census also
provides a count of the $\F_2$-points of the complement of the generic
discriminant of cubic forms in six variables.  From this, we find that
the probability that a random cubic fourfold is smooth is about
$29\%$, and we connect this to asymptotic results of
Poonen~\cite{poonen},
Church--Ellenberg--Farb~\cite{church_ellenberg_farb},
Vakil--Wood~\cite{vakil_wood}, and Howe~\cite{howe} in algebraic
geometry, number theory, and topology.

\smallskip

This article is organized as follows.  In ~\S\ref{sec:filtrations}, we
present our new method for computing orbit representatives for a
finite group $G$ acting on a finite vector space $V$ admitting a
filtration by $G$-stable subspaces, which we coin the ``filtration
method.'' We also compare the computational complexity of our method
compared with that of more traditional
methods. In~\S\ref{sec:countinghypersurfaces}, we describe the range
of applicability of the filtration method to enumerating degree $d$
hypersurfaces in $\P^n$ over $\F_q$, including the case of cubic
fourfolds over $\F_2$. Finally, in~\S\ref{sec:surveyingthedata}
and~\S\ref{sec:zeta}, we compute many invariants associated to cubic
fourfold, including their counts of points, lines, and planes, their
automorphism groups, and their zeta functions. We also discuss many
connections and complements to the existing literature.

\medskip

We would like to thank Institute for Computational and Experimental
Research in Mathematics (ICERM) for supporting the workshop
\textit{Birational Geometry and Arithmetic} in May 2018, and the
Simons Foundation for supporting the \textit{Simons Collaboration on
Arithmetic Geometry, Number Theory, and Computation Annual Meeting} in
January 2023, where parts of this projects were worked on. We would
also like to thank Nick Addington, Eran Assaf, Fran\c{c}ois Charles,
Edgar Costa, Alexander Duncan, Sarah Frei, Connor Halleck-Dub\'e,
Brendan Hassett, Kiran Kedlaya, Tony V\'arilly-Alvarado, John Voight,
and David Zureick-Brown for helpful discussions along the way.  We are
especially grateful to Drew Sutherland for his help with using the
\texttt{pclmulqdq} instruction, and to Keerthi Madapusi Pera for
explaining the key ideas needed for the Tate conjecture for cubic
fourfolds in characteristic $2$. The first author received partial
support from Simons Foundation grant 712097, National Science
Foundation grant DMS-2200845, and a Walter and Constance Burke Award
and a Senior Faculty Grant from Dartmouth College; the second author
received support from the Simons Collaboration on Arithmetic Geometry,
Number Theory, and Computation grant 550029; the fourth author
received support from the Neukom Institute for Computational Science
at Dartmouth College.

\section{Orbits via filtrations}
\label{sec:filtrations}

Let $k$ be a finite field and $V$ be a finite-dimensional $k$-vector
space on which a finite group $G$ acts linearly and faithfully.  For
$v \in V$ we denote by $G.v$ the $G$-orbit containing $v$, and by
$G_v$ the stabilizer subgroup of $v$.  If we are only interested in
the cardinality of the orbit set $V/G$, then we can use the orbit
counting formula (sometimes called ``Burnside's Lemma'' or the
``Cauchy--Frobenius Lemma'')
\begin{equation} 
\label{eqn:burnside}
\abs{V/G} = \frac{1}{\abs{G}}\sum_{g \in G} \abs{V^g} = \frac{\abs{C}}{\abs{G}} \sum_{c
\in C} \abs{E_1(c)}
\end{equation}
where $C$ is the set of conjugacy classes in $G$ and $E_1(c)$ is the
$1$-eigenspace of a representative of $c \in C$, whose cardinality does
not depend on the representative.  

However, to assemble a list of orbit representatives, one has to work
harder.  To do this, one typically runs an algorithm, such as
union-find (see \S\ref{ss:complexity} for more details), to sort each
element of $V$ into orbits under $G$, as is done for quartic surfaces
over $\F_2$ in~\cite{kedlaya_sutherland-census}.  Alternatively, one
could develop a sufficiently good $G$-invariant hash function on $V$
and try randomly sampling elements of $V$ until one finds elements in
all the orbits.  The random sampling method will often succeed in
identifying elements in all large orbits after sampling
$O\bigl(\abs{V/G}\log(\abs{V/G})\bigr)$ elements, but it can fail to
find elements in small orbits in reasonable time.  This method has been
used successfully by Costa, Harvey, and Kedlaya~\cite{costa_talk} to
give a census of quartic $K3$ surfaces over $\F_3$ and was used by
Halleck-Dub\'e~\cite{halleck} to enumerate a set of orbit
representatives for 99.9\% of the cubic fourfolds over $\F_2$.

However, working directly on $V$ may prove to be too costly (as in the
case of cubic fourfolds over $\F_2$), and we introduce a method that
can avoid this.

\subsection{Filtration method}

Suppose that there is a filtration of the $G$-module $V$
\[ 
0 = W_0 \subset \dots \subset W_\ell \subset V
\] 
by $G$-submodules $W_i \subset V$, such that enumerating $G$-orbits in
all of the associated graded pieces $ W_{i+1}/W_i$ becomes a feasible
task. Then using such a filtration, we are able to compute a full set
of $G$-orbit representatives for $V$ by chasing lifts of $G$-orbits of
$V/W_\ell$ up the successive quotients
\[
  V \to V/W_1 \to \cdots \to  V/W_\ell.
\]

Let us illustrate the method with a single-step filtration
\[
  0 \subset W \subset V
\]
and consider $U = V/W$ with $G$-equivariant quotient map $\pi : V \to
U$.  We first note that every $G$-orbit $G.v$ in $V/G$ maps to a
$G$-orbit $G.\pi(v)$ in $U/G$. This lets us write the orbit space
$V/G$ of $G$ as a disjoint union

\[
  V/G = \bigsqcup_{O \in U/G} \pi\inv(O)/G
\]
over the orbit sets of $G$ acting on the inverse
images $\pi\inv(O) \subset V$ of orbits $O \in U/G$.

So to give representatives of the orbits of $V/G$, we need to give a complete list of representatives for the $G$-orbits in each of the component orbit spaces $\pi\inv(O)/G$. 
The following elementary lemma says that we can find a complete set of representatives for $\pi\inv(O)/G$ by considering the action of a smaller group on a smaller subspace.

\begin{lemma}\label{lemma:stabilizers}
Let $V$ be a $k$-vector space with a $G$-action and let $W$ be a
$G$-invariant subspace.  Let $v \in V$ and let $\pi\colon V \to V/W$ be the natural projection. Then:
  
  \begin{enumerate}
  \item \label{lemma:stabilizers1}
    The coset $\pi^{-1}(\pi(v)) = v+W$ has a natural $G_{\pi(v)}$-action, i.e., $v+W$ is a $G_{\pi(v)}$-set.
    
  \item  \label{lemma:stabilizers2}
    Let $O=G. \pi(v) \subset V/W$ denote the $G$-orbit of $\pi(v)$. Then the map
    \[
      (v+W)/G_{\pi(v)} \to \pi\inv(O)/G
    \]
    defined by $G_{\pi(v)}.x \mapsto G.x$, is a bijection.
    
  \item\label{lemma:stabilizers3}
    $G_v \le G_{\pi(v)}.$
  \end{enumerate}
\end{lemma}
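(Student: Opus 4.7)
The plan is to unwind the definitions in each of the three parts; the lemma is essentially a bookkeeping statement about how $G$-orbits on $V$ reduce along the quotient $\pi \colon V \to V/W$, and nothing deeper than orbit-stabilizer is needed. The only part that requires any real argument is the bijection in \eqref{lemma:stabilizers2}, so that is where I would concentrate.

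For \eqref{lemma:stabilizers1}, the key observation is that $\pi^{-1}(\pi(v)) = v+W$ because $W$ is $G$-invariant (and $\pi$ is $G$-equivariant). Then for any $g \in G_{\pi(v)}$ and any $x \in v+W$, I would compute $\pi(g \cdot x) = g \cdot \pi(x) = g \cdot \pi(v) = \pi(v)$, so $g \cdot x \in v+W$. This gives the claimed action. For \eqref{lemma:stabilizers3}, if $g \in G_v$ then $g \cdot \pi(v) = \pi(g \cdot v) = \pi(v)$, so $g \in G_{\pi(v)}$.

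The heart of the lemma is \eqref{lemma:stabilizers2}. First I would check that the map $\Phi \colon G_{\pi(v)}.x \mapsto G.x$ is well-defined: if $G_{\pi(v)}.x = G_{\pi(v)}.y$ then $y = h \cdot x$ for some $h \in G_{\pi(v)} \subseteq G$, hence $G.x = G.y$; and the image lies in $\pi^{-1}(O)/G$ since $\pi(x) = \pi(v) \in O$. For surjectivity, given an orbit $G.y \in \pi^{-1}(O)/G$, write $\pi(y) = g \cdot \pi(v)$ for some $g \in G$; then $g^{-1} \cdot y \in \pi^{-1}(\pi(v)) = v+W$ and $G.y = G.(g^{-1} \cdot y)$, so $G.y$ is hit by $G_{\pi(v)}.(g^{-1} \cdot y)$. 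For injectivity, suppose $x,y \in v+W$ satisfy $G.x = G.y$, so $y = g \cdot x$ for some $g \in G$. Then $\pi(v) = \pi(y) = g \cdot \pi(x) = g \cdot \pi(v)$, which forces $g \in G_{\pi(v)}$; hence $G_{\pi(v)}.x = G_{\pi(v)}.y$.

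The only ``obstacle'' is the injectivity step, where one has to notice that the element $g \in G$ realizing $y = g \cdot x$ automatically lies in the smaller group $G_{\pi(v)}$; this uses in an essential way that $x$ and $y$ both sit in the fixed fiber $v+W$, not merely in $\pi^{-1}(O)$. Everything else is immediate from $G$-equivariance of $\pi$.
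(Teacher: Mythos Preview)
Your proof is correct. The paper actually omits the proof entirely, calling the lemma ``elementary'' and moving directly to its application; your write-up supplies exactly the routine verification the authors left to the reader, and there is nothing to compare against.
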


The upshot of Lemma~\ref{lemma:stabilizers}(\ref{lemma:stabilizers2})
is that a complete set of orbit representatives for the orbit space
$(v+W)/G_{\pi(v)}$ will also be a complete set of orbit
representatives for $G$ acting on $\pi\inv(O)/G$, and it is a far less
expensive a computation to find the former set of representatives. We
now give the full description of an algorithm that uses this principle
across successive quotients to find a set of representatives of $V/G$.

In the following, if we have vector spaces $V \supseteq W$, we denote the quotient by $\pi_{V/W} \colon V \to V/W$.

\begin{algorithm} 
\label{algorithm:main} 
\texttt{Orbits}($G$, $X$, $\mathcal{F}$) \ \\
  \textbf{Input:}
  \begin{itemize}
    \setlength{\itemsep}{0pt}
    \item 
      A $k$-vector space $V$ with the action of a group $G$.
    \item
      A known $G$-invariant filtration
      $\mathcal{F}\: 0 = W_0 \subset \dots \subset W_\ell \subset V$
      of length $\ell$.

    \item
      A $G$-invariant affine subspace $X$ of $V$ such that $X + W_\ell = X$.
      
  \end{itemize}
  \textbf{Output:} A complete set of orbit representatives for $G$ acting on $X$, together with their stabilizers.

\noindent\textbf{Steps:}

  \begin{enumerate}

 \item\label{algo:step-1}
    If $\ell = 0$ then \textbf{return}  \texttt{Orbits}($G$, $X$), a set of orbit representatives of $G$ acting on $X$ together with their stabilizers.

    \item
      Set $\overline{\mathcal{F}}: 0 = W_1/W_1  \subset \dots  \subset
      W_\ell/W_1 \subset V/W_1$, a  $G$-invariant filtration of length
      $\ell-1$, i.e., $\overline{\mathcal{F}}$ is the reduction of
      $\mathcal{F}$ modulo $W_1$.

    \item
      Compute \texttt{Orbits}($G$, $X/W_1$, $\overline{\mathcal{F}}$) via recursion.

 \item\label{algo:step-4}
    For each orbit representative $y \in X/W_1$ with stabilizer $G_y$ found in the previous step, compute \texttt{Orbits}($G_y$, $\pi_{V/W_1}^{-1}(y)$) together with their stabilizers.

  \item
    \textbf{return} the union of results from step~(\ref{algo:step-4}).
    
  \end{enumerate}

\end{algorithm}

The orbit computations in Steps~(\ref{algo:step-1})~and~(\ref{algo:step-4}) above can be computed by a generic
algorithm. Our implementation uses the default methods in \texttt{Magma}~\cite{Magma}.

\subsection{Complexity comparison}
\label{ss:complexity}

We consider the situation of a finite group $G$ acting on a finite set
$X$ and define the \textit{expected stabilizer order}
$$
e_G(X) = \frac{1}{\abs{X}}\sum_{x \in X} \abs{G_x}
$$
of the $G$-set $X$.  Then $1 \leq e_G(X) \leq \abs{G}$ with $e_G(X)=1$ if
and only if $G$ acts freely on $X$ and $e_G(X) = \abs{G}$ if and only if
$G$ acts trivially on $X$.  We remark that the proof of the orbit
counting formula implies that
$$
\abs{X/G} = e_G(X) \frac{\abs{X}}{\abs{G}}. 
$$

A naive orbit partition algorithm, such as implemented in union-find,
to partition all the elements of $X$ into orbits, works by iteratively
selecting the next unlabelled element $x \in X$ and then by labelling
the elements of $G . x$ as being in the same orbit by enumerating over
$G$. One easily finds the runtime of this procedure.

\begin{lemma}
\label{lem:runtime_naive}
The runtime of a naive orbit partition algorithm to partition the
elements of $X$ into orbits under $G$ is proportional to
$$
\abs{G} \cdot \abs{X/G} = e_G(X) \cdot \abs{X}. 
$$
\end{lemma}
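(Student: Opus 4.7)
The plan is to decompose the runtime as (cost per orbit) times (number of orbits) and then invoke the orbit-counting identity stated just before the lemma to obtain the second expression.

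First, I would argue that the outer loop of the algorithm runs exactly $\abs{X/G}$ times: at each iteration it grabs the next unlabelled element $x \in X$ and marks every element of $G.x$ as belonging to a newly created orbit label. Since no element of $G.x$ can ever again be unlabelled, the number of outer iterations equals the number of orbits.

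Second, each iteration costs $O(\abs{G})$, because the algorithm enumerates every $g \in G$ and writes the current orbit label onto $g.x$. In the standard model where looking up $g.x$ and reading or overwriting its label is $O(1)$ (e.g., $X$ is addressed by a hash or an integer index and labels are stored in a union-find structure), this is a constant amount of work per group element. Multiplying, the total runtime is proportional to $\abs{G}\cdot\abs{X/G}$, giving the first expression.

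Finally, the identity $\abs{G}\cdot\abs{X/G} = e_G(X)\cdot\abs{X}$ is exactly the orbit-counting identity recorded in the paragraph preceding the lemma. If needed I would re-derive it from orbit-stabilizer via $\sum_{x \in X}\abs{G_x} = \sum_{O \in X/G}\sum_{x \in O}\abs{G}/\abs{O} = \abs{G}\cdot\abs{X/G}$, whose left side equals $e_G(X)\cdot\abs{X}$ by definition of $e_G(X)$. I don't anticipate any real obstacle; the argument is essentially an exchange of summation. The only judgement call is how seriously to take the ``$O(1)$ per labelling operation'' assumption—under union-find with path compression this is technically an inverse-Ackermann factor rather than constant—but the lemma only claims proportionality, which comfortably absorbs such subconstant corrections.
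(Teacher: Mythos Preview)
Your proposal is correct and matches the paper's reasoning: the paper does not give an explicit proof, merely stating ``One easily finds the runtime of this procedure'' after describing the algorithm as iteratively selecting an unlabelled $x$ and labelling all of $G.x$ by enumerating $G$. Your decomposition into $\abs{X/G}$ outer iterations each costing $O(\abs{G})$, followed by the orbit-counting identity already recorded in the text, is exactly the intended argument.
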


In the situation we are interested in, where $X$ is a vector space
with a faithful $G$-representation, we usually have that $e_G(X)$ is
approximately equal to~$1$. For example, the expected order of the
stabilizer of a cubic fourfold over $\mathbb{F}_2$ turns out to be
approximately~$1.04$.  

The main improvement introduced by using the filtration method is that
one runs several orbit partitions over linear spaces of smaller
dimension. We give a precise estimate of the improvement in runtime.

\begin{lemma}
  \label{lem:runtime}
  For a single-step filtration $0\subset{W}\subset{V}$ of $G$-modules,
  with $U = V/W$, the runtime complexity of
  Algorithm~\ref{algorithm:main} is proportional to

  \[
    \frac{e_G(W) e_G(U)}{e_G(V)} \cdot \abs{V/G}.
  \]

\end{lemma}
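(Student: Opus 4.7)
The plan is to apply Lemma~\ref{lem:runtime_naive} separately to the two naive orbit partitions invoked by Algorithm~\ref{algorithm:main} in the single-step case, and to combine the resulting costs using the fiber decomposition provided by Lemma~\ref{lemma:stabilizers}.

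In the single-step setting, the recursion reduces the filtration to length zero, so the base case Step~(\ref{algo:step-1}) runs a naive orbit partition of $G$ on $U = V/W$; by Lemma~\ref{lem:runtime_naive} this costs $\propto e_G(U)|U|$. Step~(\ref{algo:step-4}) then iterates over the resulting orbit representatives $y \in U/G$ and, for each, runs a naive orbit partition of $G_y$ on the $G_y$-stable coset $\pi^{-1}(y) = v_y + W$ (Lemma~\ref{lemma:stabilizers}(\ref{lemma:stabilizers1})), costing $\propto e_{G_y}(v_y + W)\cdot|W|$. The total runtime is thus proportional to
\[
e_G(U)|U| \;+\; |W| \sum_{y \in U/G} e_{G_y}(v_y + W),
\]
with the second term dominating whenever $|W/G| > 1$.

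To analyse the sum $S \coloneqq \sum_{y \in U/G} e_{G_y}(v_y + W)$, I would double-count $\sum_{v \in V} |G_v| = e_G(V)|V|$. Using $G_v = (G_{\pi(v)})_v$ from Lemma~\ref{lemma:stabilizers}(\ref{lemma:stabilizers3}) gives $\sum_{v \in \pi^{-1}(u)} |G_v| = |W|\cdot e_{G_u}(\pi^{-1}(u))$, and since the summand depends only on the $G$-orbit of $u$, collecting like terms yields
\[
\sum_{y \in U/G} [G : G_y]\cdot e_{G_y}(v_y + W) \;=\; e_G(V)\cdot|U|.
\]
Together with the decomposition $|V/G| = \sum_y |(v_y + W)/G_y|$ from Lemma~\ref{lemma:stabilizers}(\ref{lemma:stabilizers2}) and the identity $e_G(X)|X| = |G|\cdot|X/G|$ applied to $X \in \{U,W,V\}$, a direct algebraic manipulation shows that $|W|\cdot S$ takes the form $\tfrac{e_G(W)\,e_G(U)}{e_G(V)}\cdot|V/G|$ after one replaces $[G:G_y]$ and $|(v_y + W)/G_y|$ by their typical values $|G|/e_G(U)$ and $|W/G|$ across orbit representatives.

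The main obstacle is precisely this passage from the exact weighted sum to the unweighted sum $S$: it requires an averaging hypothesis asserting that $[G : G_y]$ and $|(v_y + W)/G_y|$ are approximately constant across $y \in U/G$. I therefore expect the Lemma to be proved as an average/typical-case complexity estimate, with the Step~(\ref{algo:step-1}) contribution $e_G(U)|U| = |G|\cdot|U/G|$ absorbed into the dominant term whenever $|W/G| > 1$, which is the regime of the applications of interest in this paper.
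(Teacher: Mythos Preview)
Your setup is correct and your expression $|W|\cdot S = \sum_{y\in U/G} |G_y|\cdot|(v_y+W)/G_y|$ agrees exactly with the paper's equation for the runtime. However, there is a genuine gap in your argument, and the paper closes it without any averaging hypothesis.

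The paper treats the lemma as an \emph{upper bound} (despite the ``proportional to'' phrasing) and bounds each summand individually. By Burnside applied to $G_y$ acting on $v_y+W$,
\[
|G_y|\cdot|(v_y+W)/G_y| \;=\; \sum_{g\in G_y}\bigl|(v_y+W)^g\bigr| \;\le\; \sum_{g\in G}\bigl|(v_y+W)^g\bigr| \;\le\; \sum_{g\in G}\bigl|W^g\bigr| \;=\; |G|\cdot|W/G|,
\]
where the second inequality uses the observation that if $(v_y+W)^g$ is nonempty then translation by any fixed point gives a bijection $W^g\to(v_y+W)^g$, so $|(v_y+W)^g|\in\{0,|W^g|\}$. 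Summing over the $|U/G|$ representatives gives $|U/G|\cdot|G|\cdot|W/G|$, which rewrites (via $e_G(X)=|G|\,|X/G|/|X|$ for $X=U,W,V$) as $\tfrac{e_G(W)e_G(U)}{e_G(V)}\cdot|V/G|$. No appeal to ``typical values'' of $[G:G_y]$ or $|(v_y+W)/G_y|$ is needed.

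Your weighted identity $\sum_y [G:G_y]\,e_{G_y}(v_y+W)=e_G(V)\,|U|$ is correct, but it computes the wrong thing: it recovers the naive runtime $|G|\cdot|V/G|$, not the filtration-method runtime, and there is no general way to deduce the unweighted sum $S$ from it. The key idea you are missing is precisely the term-by-term Burnside bound with the translation trick. Your remark about the Step~(\ref{algo:step-1}) cost $|G|\cdot|U/G|$ being absorbed is fine, since it is dominated by $|G|\cdot|U/G|\cdot|W/G|$ whenever $|W/G|\ge 1$.
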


\begin{proof}
Let $\texttt{Orbits}(G,U)$ be a chosen set of representatives $v \in
V$ for the $G$-orbits on $U$.  For a vector $v \in V$ denote its
image in $U$ by $\bar v$.  The filtration method algorithm consists
of a union find over $(v+W)/G_{\overline{v}}$ for each $v \in
\texttt{Orbits}(G,U)$. The runtime complexity is then proportional
to
  \begin{equation}
    \label{eq:runtime-actual}
    \sum_{v \in \texttt{Orbits}(G,U)} \abs{G_{\overline{v}} }\cdot \abs{(v+W)/G_{\overline{v}}}.
  \end{equation}
  One sees immediately that this is bounded by $\abs{G} \cdot \abs{V/G} =
  e_G(V) \cdot \abs{V}$, which is the runtime of a naive orbit partition
  algorithm, see Lemma~\ref{lem:runtime_naive}. On the other hand, for
  each $v$ we have
  \[
    \abs{G_{\overline{v}}} \cdot \abs{(v+W)/G_{\overline{v}}} = \sum_{g \in G_{\bar v}} \abs{(v+W)^g} \leq \sum_{g \in G} \abs{(v+W)^g}
    \leq \sum_{g \in G} \abs{W^g} = \abs{G} \cdot \abs{W/G}
  \]
  where the rightmost inequality follows from the observation that 
  for any element $z \in (v+W)^g$, translation by $z$ induces a
  bijection $W^g \to (v+W)^g$.  Thus \eqref{eq:runtime-actual} is
  bounded by $\abs{W/G} \cdot \abs{U/G} \cdot \abs{G} = {e_G(W) e_G(U)}/{e_G(V)} \cdot \abs{V/G}$.
\end{proof}

Lemma~\ref{lem:runtime} shows that the filtration method, in the
presence of a nontrivial filtration, will strictly improve upon
(unless the action is trivial) a naive orbit partition for the
purposes of finding a set of orbit representatives.

If we make the heuristic assumption that the expected stabilizer
orders of $W,$ $U,$ and $V$ are all approximately equal to~$1$, then the
asymptotic runtime is linear in the total number of orbits in $V$,
i.e., linear in the size of the output. In particular, our algorithm
would be asymptotically optimal.  This assumption seems to hold in the cases
identified in \S\ref{sec:otherfiltration}.  

\section{Enumerating hypersurfaces}
\label{sec:countinghypersurfaces}
Let $n, d$ be positive integers, and $\F_q$ denote the finite field
with $q$ elements. In this section, we explain when and how the
filtration method (Algorithm~\ref{algorithm:main}) can be used to
produce a complete enumeration of the set of $\F_q$-isomorphism
classes of degree $d$ hypersurfaces in $\P_{\F_q}^{n+1}$.

\begin{prop}
\label{prop:hypersurfaceautsF2} 
Let $k$ be any field.  If $n \ge 3$ and $d \ge 3$, then the set of
$k$-isomorphism classes of degree $d$ hypersurfaces in $\P_k^{n+1}$ is
in bijection with the set of $\PGL_{n+2}(k)$-orbits on the set of lines
$\P(\Sym^d(k^{n+2}))(k)$ in $\Sym^d(k^{n+2})$.

Moreover, if every element of $k^\times$ is a $d$th power, e.g., if $k
= \F_q$ and $q-1$ is relatively prime to $d$, then the set of
$k$-isomorphism classes of degree $d$ hypersurfaces in $\P_k^{n+1}$ is
in bijection with the set of nonzero $\GL_{n+2}(k)$-orbits on the
$k$-vector space $\Sym^d(k^{n+2})$.
\end{prop}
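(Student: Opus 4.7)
The plan is to set up the bijection in three layers: (i) hypersurfaces versus lines in $\Sym^d(k^{n+2})$, (ii) projective equivalence versus $\PGL_{n+2}(k)$-orbits, and (iii) abstract $k$-isomorphism versus projective equivalence. The first layer is essentially tautological. A degree $d$ hypersurface $X \subset \P_k^{n+1}$ is a closed subscheme cut out by a single nonzero homogeneous form $f \in \Sym^d(k^{n+2})$, and since $k[x_0,\dots,x_{n+1}]$ is a UFD, two nonzero forms $f,g$ of degree $d$ cut out the same subscheme if and only if $f = \lambda g$ for some $\lambda \in k^\times$. This gives a bijection $\P(\Sym^d(k^{n+2}))(k) \leftrightarrow \{\text{degree $d$ hypersurfaces in }\P_k^{n+1}\}$.

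For the second layer, the classical identification $\Aut(\P_k^{n+1}) = \PGL_{n+2}(k)$ shows that the natural $\PGL_{n+2}(k)$-action on $\P(\Sym^d(k^{n+2}))$ is equivariant with respect to the bijection above and the action of $\Aut(\P_k^{n+1})$ on hypersurfaces by change of coordinates. Hence $\PGL_{n+2}(k)$-orbits correspond exactly to projective equivalence classes of hypersurfaces.

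The heart of the argument, and the only place the hypotheses $n \ge 3$ and $d \ge 3$ are used, is the third layer: every abstract $k$-isomorphism $\phi \colon X \xrightarrow{\sim} X'$ between degree $d$ hypersurfaces in $\P_k^{n+1}$ is induced by a linear change of coordinates. I would cite (or recall the argument of) Matsumura--Monsky. The idea is that the polarization $\OO_X(1)$ is intrinsic to $X$: by adjunction, $\omega_X \cong \OO_X(d-n-2)$, and combined with Lefschetz-type results for hypersurfaces of dimension $\ge 3$, one can recover $\OO_X(1)$ (up to a sign determined by ampleness) from the abstract scheme structure. The isomorphism $\phi$ then pulls back sections, identifying $H^0(X',\OO_{X'}(1))$ with $H^0(X, \OO_X(1))$ compatibly with the restriction maps from $H^0(\P^{n+1},\OO(1))$, and one extends $\phi$ to a linear automorphism of $\P_k^{n+1}$. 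The delicate special cases (e.g.\ $d = n+2$, where $\omega_X$ is trivial) need to be addressed separately, and this is where I expect the main technical difficulty to lie; the restriction $d \ge 3$ rules out the most degenerate cases of hyperplanes and quadrics where the automorphism group is much larger than the linear one.

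For the moreover statement, I would pass from $\P(\Sym^d(k^{n+2}))(k)$ to $\Sym^d(k^{n+2}) \setminus \{0\}$ by analyzing the action of the center of $\GL_{n+2}(k)$. A scalar $\lambda \cdot I \in \GL_{n+2}(k)$ acts on $\Sym^d(k^{n+2})$ by multiplication by $\lambda^d$, so the image of the central $\Gm(k)$-action on $\Sym^d(k^{n+2})$ is the subgroup $(k^\times)^d$ of $d$-th powers. The $\GL_{n+2}(k)$-orbits on $\Sym^d(k^{n+2}) \setminus \{0\}$ project to $\PGL_{n+2}(k)$-orbits on $\P(\Sym^d(k^{n+2}))(k)$, and two nonzero forms $f,\lambda f$ lie in the same $\GL_{n+2}(k)$-orbit if and only if $\lambda \in (k^\times)^d$. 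Hence under the hypothesis $(k^\times)^d = k^\times$, the projection is a bijection on orbits. For $k = \F_q$, the cyclic group $\F_q^\times$ of order $q-1$ satisfies this precisely when $\gcd(q-1,d) = 1$, yielding the stated sufficient condition.
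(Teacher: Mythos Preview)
Your layered setup is sound, and your treatment of the ``moreover'' clause matches the paper's (one quibble: the ``only if'' direction of your claim that $f$ and $\lambda f$ are $\GL_{n+2}(k)$-conjugate iff $\lambda \in (k^\times)^d$ can fail for forms with extra symmetry, but you only need and only use the ``if'' direction).

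The real gap is in layer~(iii). Your argument via Matsumura--Monsky and the adjunction formula $\omega_X \cong \OO_X(d-n-2)$ addresses only \emph{smooth} hypersurfaces, whereas the proposition---and its role in the paper, which enumerates all $3\,718\,649$ cubic fourfolds over $\F_2$, singular ones included---concerns arbitrary degree~$d$ hypersurfaces. For singular $X$, neither adjunction nor the classical Lefschetz hyperplane theorem for $\Pic$ is available in the form you invoke, and Matsumura--Monsky does not apply. The paper instead cites the Grothendieck--Lefschetz theorem from SGA~2 (Exp.~XII, Cor.~3.6), which gives the extension of automorphisms to the ambient $\P^{n+1}$ directly for any (not necessarily smooth) hypersurface of dimension $\ge 3$ and degree $\ge 3$. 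This route also dissolves the $d = n+2$ difficulty you flag: one never needs $\omega_X$, since once $\Pic(X) \cong \Z$ the ample generator $\OO_X(1)$ is already intrinsic, so the Calabi--Yau case is not special.
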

\begin{proof}
The Grothendieck--Lefschetz
Theorem~\cite[Exp.~XII,~Corollaire~3.6]{sga-2} says that any
automorphism of a (not necessarily smooth) hypersurface of dimension
$n\ge 3$ and degree $d \ge 3$ extends to the ambient $\P_k^{n+1}$.  In
particular, two such hypersurfaces are $k$-isomorphic if and only if
they lie in the same $\PGL_{n+2}(k)$-orbit of the linear system of
$\mathcal{O}_{\P^{n+1}}(d)$.  

When every element of $k^\times$ is a $d$th power, the central $\Gm
\subset \GL_{n+2}$ acts transitively on the set of multiples of a
given homogeneous form of degree $d$ over $k$, so that the natural
surjective map
$$
\Sym^d(k^{n+2})/\GL_{n+2}(k) \to \P(\Sym^d(k^{n+2}))(k)/\PGL_{n+2}(k)
$$ 
is a bijection.
\end{proof}

\subsection{A filtration on cubic fourfolds over $\F_2$}
By Proposition~\ref{prop:hypersurfaceautsF2}, the
$\GL_{6}(\F_2)$-orbits of nonzero cubic forms in $6$ variables are
precisely the $\F_2$-isomorphism classes of cubic fourfolds. We will
now show how the filtration method lets us enumerate a representative
cubic fourfold in each isomorphism class, equivalently, a complete set
of nonzero $\GL_6(\F_2)$-orbit representatives on
$V=H^0(\mathbb{P}^5_{\F_2}, \mathcal{O}(3))$.

Using Equation (\ref{eqn:burnside}), one computes that that the number
of orbits is $3\,718\,650$, which seems manageable compared the total
number $\abs{V} = 2^{56}$ of cubics.  Even with the unrealistically
generous assumption that computing $f^g$, for some general $f \in V$
and $g \in \GL_6(\F_2)$, takes $10^{-9}$ (s), a naive orbit partition
algorithm applied to $V$ using a single 4 GHz processor would take at
least
\[
  \frac{2^{56}}{86400 \cdot 4 \cdot 10^9} \sim 208 \text{ days}.
\]
In other words, a direct, parallelized, and highly optimized
implementation of union-find might enumerate all of the orbits, but it
would nevertheless take a while.

Instead, we make use of a natural two-step filtration of
$G$-submodules 
\[
0 \subset W_1 \subset W_2 \subset V
\]
and employ
Algorithm~\ref{algorithm:main}.  Here, $W_1 \subset V$ is the subspace
of {\it Waring representable} cubic forms, i.e., those that can be
written as a sum of cubes of linear forms, and $W_2 \subset V$ is the
subspace of cubic forms that can be written as a sum of products of a
linear form and a square of a linear form.  In other words,
\begin{align*}
  W_1 &= \text{span}\{l^3 \; : \; l \in
        H^0(\mathbb{P}^5_{\F_2}, \mathcal{O}(1))\}\\
  W_2 &= \text{span}\{l_1 \cdot l_2^2 \; : \; l_1, l_2 \in
        H^0(\mathbb{P}^5_{\F_2}, \mathcal{O}(1))\}.
\end{align*}
A computer calculation shows that $\dim_{\F_2}(W_1) =
21$ and $\dim_{\F_2}(W_2) = 36$.

Our implementation in \texttt{Magma}~\cite{Magma} of
Algorithm~\ref{algorithm:main} with this particular two-step
filtration outputs a complete set of representatives for the
$3\,718\,650$ orbits in $V$ with runtime under $100$ minutes on a
household laptop computer. 
A complete set of orbit representatives together with the code to read them in is available as an ancillary file in the arXiv distribution of this article. Our complete code library is available from~\cite{github}. The intrinsic {\tt LoadCubicOrbitData} from the {\tt CubicLib.m} library reads in the orbit representatives.

\subsection{A filtration on quartic surfaces over $\F_2$}
As an alternative to the orbit partition method employed
in~\cite{kedlaya_sutherland-census}, we implemented the filtration
method to find a representative of each $\PGL_4(\F_2)$-orbit of
quartic surfaces over $\F_2$.  We note that because there are
automorphisms of $K3$ surfaces that do not fix a given degree $4$
polarization, some isomorphism classes split up into different linear
orbits, but a list of orbit representatives will contain a complete
set of isomorphism classes as a subset.  By
Lemma~\ref{prop:hypersurfaceautsF2}, we can compute the
$\GL_4(\F_2)$-orbits on the $35$-dimensional $\F_2$-vector space $V =
H^0(\P^3_{\F_2},\mathcal{O}(1))$ of homogeneous quartic forms in four
variables. Let $W \subset V$ be the submodule spanned by all
quartics of the form $l_1^3 l_2 + l_1 l_2^3$ where $l_1$ and $l_2$ are
linear forms. A computer calculation shows that $\dim_{\F_2}(W)=20$.
Then Algorithm~\ref{algorithm:main}, with the one-step filtration
$0\subset W \subset V$, finds a complete set of orbit
representatives for the $1\,732\,564$ orbit in $V$ in about $3$ minutes
on a laptop computer. We stumbled upon the $G$-submodule $W\subset
V$ using \texttt{Magma}'s {\tt IsIrreducible} intrinsic.

\subsection{Cubic fourfolds over $\F_3$}

Unfortunately, the $\GL_6(\F_3)$-module of the $56$-dimensional
$\F_3$-vector space $V$ of cubic forms in six variables has an
irreducible composition factor of dimension $50$.  Hence the
filtration method alone does not provide a sufficiently significant
speedup to make the orbit enumeration problem computational tractable
in this case.

\subsection{Filtration method for general hypersurfaces} \label{sec:otherfiltration}

One may wonder about the extent to which the filtration method
presented in \S\ref{sec:filtrations} can aid in the census of
isomorphism classes of hypersurfaces of dimension $n$ and degree $d$
over $\mathbb{F}_q$ for various $(n, d, q)$. Since the $d$th symmetric
power of the standard representation of the linear algebraic group
$\GL_{n+2}$ is irreducible, the $\GL_{n+2}(\F_q)$-representation
$\Sym^d(\F_q^{n+2})$ is irreducible for all $\F_q$ with
$\mathrm{char}(\F_q) > d$ (cf.\ \cite[Theorem~1.1]{steinberg}), hence
the filtration method is not applicable.  On the other hand, the
$\GL_{n+2}(\F_q)$-representation $\Sym^d(\F_q^{n+2})$ is reducible for
all $\F_q$ with $\mathrm{char}(\F_q) \leq d$, hence the filtration
method does offer some speedup when~$q$ is small.  However, one
quickly sees (see Tables~\ref{fig:filtration-feasibility1},
\ref{fig:filtration-feasibility2}, \ref{fig:filtration-feasibility3})
even for moderately small parameters that such a census is infeasible
simply because it is not computationally practical to store the
answer. (We consider $10^{14}$ a generous allowance for the maximum
number of orbits that can be computed.)

When the total number of orbits is reasonably sized, the determination
of whether a particular value of $(n,d,q)$ is in the feasible range is
based on timings for computing $g . x$ on a standard laptop. We also
assume that 100 cores are available to parallelize the computation
over the course of a year. Thus, the projections listed in
Tables~\ref{fig:filtration-feasibility1},
\ref{fig:filtration-feasibility2}, \ref{fig:filtration-feasibility3}
are perhaps excessively optimistic.

\begin{figure}[h]
  \centering
  {\tiny \begin{tabular}{c|ccccccccccccccccc}
    ~\;~$n \backslash d$ & 2 & 3 & 4 & 5 & 6 & 7 & 8 & 9 & $\cdots$ & 48 & 49 \\ \hline
    0 & Y$|$Y & Y$|$Y & Y$|$Y & Y$|$Y & Y$|$Y & Y$|$Y & Y$|$Y & Y$|$Y & $\cdots$ & Y$|$Y & X \\
    1 & Y$|$Y & Y$|$Y & Y$|$Y & Y$|$Y & Y$|$Y & Y$|$Y & Y$|$Y & X & $\cdots$ & X & X \\
    2 & Y$|$Y & Y$|$Y & Y$|$Y & \fbox{N$|$Y} & X & X & X & X & $\cdots$ & X & X \\
    3 & Y$|$Y & Y$|$Y & X & X & X & X & X & X & $\cdots$ & X & X \\
    4 & Y$|$Y & \fbox{N$|$Y} & X & X & X & X & X & X & $\cdots$ & X & X \\
    5 & Y$|$Y & \fbox{N$|$Y} & X & X & X & X & X & X & $\cdots$ & X & X \\
    6 & Y$|$Y & X & X & X & X & X & X & X & $\cdots$ & X & X \\
    7 & Y$|$Y & X & X & X & X & X & X & X & $\cdots$ & X & X \\
    8 & \fbox{N$|$Y} & X & X & X & X & X & X & X & $\cdots$ & X & X \\
    $\vdots$ & $\vdots$ & $\vdots$ & $\vdots$ & $\vdots$ & $\vdots$ & $\vdots$
             & $\vdots$ & $\vdots$ & $\vdots$ & $\vdots$ & $\vdots$ \\
   20 & \fbox{N$|$Y} & X & X & X & X & X & X & X & $\cdots$ & X & X \\
  \end{tabular}}
  \caption{List of feasible cases for $q=2$. Too many orbits indicated
  by X; and Y, N indicate yes, no, respectively. The left symbol is
  for basic union-find, and the right symbol is using the filtration
  method.}
  \label{fig:filtration-feasibility1}
\end{figure}

\begin{figure}[h]
  \centering
 {\tiny \begin{tabular}{c|ccccccccccccccccc}
    ~\;~$n \backslash d$ & 2 & 3 & 4 & 5 & 6 & 7 & 8 & 9 & $\cdots$ & 31 & 32 \\ \hline
    0 & Y$|$Y & Y$|$Y & Y$|$Y & Y$|$Y & Y$|$Y & Y$|$Y & Y$|$Y & Y$|$Y & $\cdots$ & Y$|$Y & X \\
    1 & Y$|$Y & Y$|$Y & Y$|$Y & Y$|$Y & Y$|$Y & \fbox{N$|$Y} & X & X & $\cdots$ & X & X \\
    2 & Y$|$Y & Y$|$Y & \fbox{N$|$Y} & X & X & X & X & X & $\cdots$ & X & X \\
    3 & Y$|$Y & \fbox{N$|$Y} & X & X & X & X & X & X & $\cdots$ & X & X \\
    4 & Y$|$Y & \fbox{N$|$Y} & X & X & X & X & X & X & $\cdots$ & X & X \\
    5 & Y$|$Y & X & X & X & X & X & X & X & $\cdots$ & X & X \\
    6 & N$|$N & X & X & X & X & X & X & X & $\cdots$ & X & X \\
  \end{tabular}}
  \caption{List of feasible cases for $q=3$. Too many orbits indicated
  by X; and Y, N indicate yes, no, respectively. The left symbol is
  for basic union-find, and the right symbol is using the filtration
  method.}
  \label{fig:filtration-feasibility2}
\end{figure}

\begin{figure}[h]
  \centering
  {\tiny \begin{tabular}{c|ccccccccccccccccc}
    ~\;~$n \backslash d$ & 2 & 3 & 4 & 5 & 6 & 7 & 8 & 9 & $\cdots$ & 22 & 23 \\ \hline
    0 & Y$|$Y & Y$|$Y & Y$|$Y & Y$|$Y & Y$|$Y & Y$|$Y & Y$|$Y & Y$|$Y & $\cdots$ & Y$|$Y & X \\
    1 & Y$|$Y & Y$|$Y & Y$|$Y & Y$|$Y & \fbox{N$|$Y} & X & X & X & $\cdots$ & X & X \\
    2 & Y$|$Y & Y$|$Y & X & X & X & X & X & X & $\cdots$ & X & X \\
    3 & Y$|$Y & N$|$N & X & X & X & X & X & X & $\cdots$ & X & X \\
    4 & Y$|$Y & X & X & X & X & X & X & X & $\cdots$ & X & X \\
    5 & N$|$N & X & X & X & X & X & X & X & $\cdots$ & X & X \\
  \end{tabular}}
  \caption{List of feasible cases for $q=5$. Too many orbits indicated
  by X; and Y, N indicate yes, no, respectively. The left symbol is
  for basic union-find, and the right symbol is using the filtration
  method.}
  \label{fig:filtration-feasibility3}
\end{figure}

\section{Surveying the database}\label{sec:surveyingthedata}

Our census lets us survey several interesting invariants associated to
cubic fourfolds over $\F_2$. We now outline the main invariants
studied in this section. Our dataset includes the $\F_2$-automorphism
groups of every cubic fourfold over $\F_2$, which happens to coincide
with $\GL_6(\F_2)$-stabilizer that we compute in the course of running
the filtration method. Some highlights of this automorphism data are
presented in~\S\ref{ss:auts}. In~\S\ref{ss:countingcubics}, the orders
of the automorphism groups are used to count $\F_2$-points on the {\it
discriminant complement} of the moduli space of cubic hypersurfaces in
$\P^5$; we relate this count to work of Poonen~\cite{poonen},
Church--Ellenberg--Farb~\cite{church_ellenberg_farb},
Vakil--Wood~\cite{vakil_wood}, and Howe~\cite{howe}. Finally,
in~\S\ref{ss:linearspaces}, we compute all $\F_2$-lines and planes on
cubics in our database, verifying statistics predicted by
Debarre--Laface--Roulleau~\cite{debarre_laface_roulleau} and giving a
lower bound on the number of smooth {\it rational} cubic fourfolds
over $\F_2$.

\subsection{Automorphisms of cubics}\label{ss:auts} 
The automorphism groups of cubic hypersurfaces over various fields
have been well-studied.  Over the complex numbers, the {\it
symplectic} automorphism groups of smooth cubic fourfolds were
recently fully classified by Laza and Zheng~\cite{laza-zheng}, and
they additionally prove that the Fermat cubic has the largest
automorphism group of any smooth cubic fourfold over $\C$ (see also
\cite{ouchi}).  In positive characteristic, we do not know of any body
of literature on the automorphism groups of cubic hypersurfaces of
dimension $>2$. The automorphism groups of cubic {\it surfaces} over
algebraically closed fields of {\it arbitrary characteristic} were
completely classified by Dolgachev and
Duncan~\cite{dolgachev_duncan}. Our orbit-finding method yields a
complete classification of the $\F_2$-automorphism groups of cubic
surfaces, threefolds, and fourfolds. We report some specific results
on cubic fourfolds here.

First, we compare the automorphism group of a hypersurface with its
stabilizer subgroup.  

\begin{prop}
\label{prop:automorphisms}
Let $k$ be a field and assume that $n \geq 3$ and $d \geq 3$.  Let $f
\in \Sym^d(k^{n+2})$ be nonzero and $X \subset \P_{k}^{n+1}$ be the
associated projective hypersurface.  Then the
$\PGL_{n+2}(k)$-stabilizer of the line spanned by $f$ is isomorphic to
the group $\Aut_k(X)$ of $k$-automorphisms $X$.

Moreover, if every element in $k^\times$ is a $d$th power (e.g., if
$k=\F_q$ and $q-1$ is relatively prime to $d$) then the
$\GL_{n+2}(k)$-stabilizer of $f$ is isomorphic to the group
$\Aut_k(X)$ of $k$-automorphisms $X$.
\end{prop}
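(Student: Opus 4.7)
The plan is to reduce both statements to the Grothendieck--Lefschetz extension theorem already invoked in the proof of Proposition~\ref{prop:hypersurfaceautsF2}, together with a careful bookkeeping of scalar actions on the homogeneous form $f$.

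For the first claim, I would begin by noting that a linear map $\tilde\phi \in \GL_{n+2}(k)$ preserves the hypersurface $X = V(f) \subset \P^{n+1}_k$ as a closed subscheme if and only if $\tilde\phi \cdot f = \lambda f$ for some $\lambda \in k^\times$, since $f$ generates the ideal of $X$ uniquely up to units in $k$. Passing to the quotient by the central $\Gm$, this identifies the $\PGL_{n+2}(k)$-stabilizer of the line $[f] \subset \Sym^d(k^{n+2})$ with the subgroup of $\PGL_{n+2}(k)$ that preserves $X$ setwise. By Grothendieck--Lefschetz (\cite[Exp.~XII, Cor.~3.6]{sga-2}), since $n \geq 3$ and $d \geq 3$, every $k$-automorphism of $X$ extends to a (unique) element of $\PGL_{n+2}(k)$; uniqueness here uses that $X$ spans $\P^{n+1}_k$, so any projective linear map is determined by its restriction to $X$. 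Combining these two observations yields the first isomorphism.

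For the moreover statement, I would examine the natural homomorphism
\[
  \text{Stab}_{\GL_{n+2}(k)}(f) \longrightarrow \text{Stab}_{\PGL_{n+2}(k)}([f])
\]
induced by the projection $\GL_{n+2}(k) \to \PGL_{n+2}(k)$. If $[\tilde\phi]$ stabilizes $[f]$ with $\tilde\phi \cdot f = \lambda f$, then under the $d$th-power hypothesis I can write $\lambda^{-1} = \mu^d$ and replace $\tilde\phi$ by $\mu\tilde\phi$, producing a lift in the $\GL$-stabilizer of $f$; this proves surjectivity. The kernel consists of scalar matrices $\mu I$ with $\mu^d = 1$, i.e., $\mu_d(k)$ viewed in the center. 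In the finite-field case $k = \F_q$ with $\gcd(d, q-1) = 1$, the map $x \mapsto x^d$ is a bijection on $\F_q^\times$, so both hypotheses hold simultaneously---every element is a $d$th power and $\mu_d(\F_q)$ is trivial---giving that this map is an isomorphism. Composing with the first part then identifies the $\GL_{n+2}(k)$-stabilizer of $f$ with $\Aut_k(X)$.

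The only mild obstacle is cosmetic: keeping careful track of the distinction between stabilizing the vector $f$ and stabilizing the line $[f]$, and verifying triviality of $\mu_d(k)$ in the finite-field setting of interest. Both concerns dissolve under the hypothesis that $k^\times$ consists of $d$th powers, so the proof is essentially immediate once Grothendieck--Lefschetz is in hand.
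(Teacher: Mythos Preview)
Your proof is correct. For the first assertion you match the paper's approach via Grothendieck--Lefschetz. For the ``moreover'' you take a more elementary route than the paper: instead of the paper's short exact sequence $1 \to \mu_d \to G_f \to \Aut_k(X) \to 1$ of group schemes and its long exact sequence in flat cohomology (where surjectivity of $G_f(k) \to \Aut_k(X)$ comes from $H^1(k,\mu_d) \cong k^\times/(k^\times)^d = 0$), you directly lift an element of the $\PGL$-stabilizer by rescaling with a $d$th root of $\lambda^{-1}$. Your argument is more hands-on; the paper's makes the obstruction-theoretic structure transparent. One small caution: you correctly identify that injectivity needs $\mu_d(k)=1$, which holds when $k=\F_q$ with $\gcd(d,q-1)=1$, but your closing remark that this ``dissolves'' under the bare hypothesis $k^\times=(k^\times)^d$ is not quite right (e.g., over $\C$); the paper's proof contains the same elision, and it is harmless for the application to $\F_2$.
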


\begin{proof}
The first statement follows immediately from
Proposition~\ref{prop:hypersurfaceautsF2}. As for the second
statement, if $G_f \subset \GL_{n+2}$ denotes the stabilizer $k$-subgroup
scheme of $f$, then we have a short exact sequence of $k$-group schemes
\[
1 \to \mu_d \to G_f \to \Aut_k(X) \to 1,
\]
where here, $\Aut_k(X)$ is considered as a constant group scheme.  The
associated exact sequence in flat cohomology starts
\[
1 \to \mu_d(k) \to G_f(k) \to \Aut(X) \to H^1(k, \mu_d).
\]
Under the hypotheses that every element in $k^\times$ is a $d$th
power, we have that $\mu_d(k)$ and $H^1(k,\mu_d)$ are trivial.  The
statement then follows since $G_f(k)$ coincides with the
$\GL_{n+2}(k)$-stabilizer subgroup of $f$.  Indeed, the exact
sequence in flat cohomology associated to the stabilizer subgroup
group scheme starts
\[
1 \to G_f(k) \to \GL_{n+2}(k) \to (\GL_{n+2}.f)(k) \to H^1(k,G_f)
\]
and, since $f$ is a $k$-rational point in the orbit, $G_f(k)$ is
precisely the subset of elements of $\GL_{n+2}(k)$
acting trivially on $f$. 
\end{proof}

Hence in the case of cubic fourfolds over $\F_2$, the stabilizer of a
cubic form coincides with the $\F_2$-automorphism group of its
associated hypersurface.  To compute the $G=\GL_6(\F_2)$-stabilizer of
the cubic form $f\in V$ that defines $X$, we use the same idea behind
Algorithm~\ref{algorithm:main} to successively reduce the order of the
acting group: letting $\pi_1 \colon V \to V/W_1$ and $\pi_2 \colon
V/W_1 \to V/W_2$ denote the natural projections,
Lemma~\ref{lemma:stabilizers}(\ref{lemma:stabilizers3}) tells us that
\[G_f = (G_{\pi_1(f)})_f = ((G_{\pi_2(f)})_{\pi_1(f)})_f.\]

\begin{remark}[The isomorphism problem] 
A similar application of the filtration method gives an efficient
algorithm for solving the cubic isomorphism problem over~$\F_2$: our
library includes an intrinsic {\tt IsEquivalentCubics} which
determines whether two cubic fourfolds are $\F_2$-isomorphic, and if
they are, returns an explicit isomorphism between them. It runs in
about 0.2 seconds per pair of cubics on a household laptop.
\end{remark}

We now discuss some highlights emerging from our computation of the
automorphism groups of cubic fourfolds over $\F_2$. It is a well-known
result that the automorphism group of a generic hypersurface of
dimension $n \geq 2$ and degree $d \geq 3$, over an algebraically
closed field, is trivial (see~\cite{matsumura_monsky}). Indeed, our
survey shows that $\Aut_{\F_2}(X)=\{\text{id} \}$ for most cubic
fourfolds.

\begin{comp}
\label{comp:mostlytrivial} 
Among the $3\,718\,649$ isomorphism classes of cubic fourfolds over
$\F_2$, there are $3\,455\,271$, or about $92.9\%$, with trivial
stabilizer. Among the  $1\, 069\, 562$ isomorphism classes of\ {\it smooth} cubic fourfolds over
$\F_2$, there are $1\,029\,478$, or about $96.3\%$, with trivial stabilizer.
\end{comp}

We summarize our computation of all of the nontrivial automorphism groups of cubic fourfolds in the next theorem.

\begin{theorem}
If $X$ is cubic fourfold over $\F_2$, then the order of
$\Aut_{\F_2}(X)$ is one of the following 87 possibilities:
  \[
    \numberlist{  
      1, 2, 3, 4, 5, 6, 7, 8, 9, 10, 12, 15, 16, 18, 20, 21, 24, 
      30, 32, 36, 42, 48, 60, 63, 64, 72, 84, 96, 108, 
      120, 126, 128, 144, 160, 168, 192, 256, 288, 320, 384, 
      512, 576, 648, 672, 720, 768, 882, 
      1024, 1152, 1344, 1440, 1536, 
      1920, 2016, 2048, 2160, 2304, 
      3072, 3840, 4032, 4608, 6144, 7680, 9216, 
      10752, 11520, 12288, 18432, 23040, 24576, 27648, 32256, 
      36864, 73728, 86016, 172032, 258048, 344064, 516096, 1105920, 1451520, 
      1806336, 5160960, 10321920, 15482880, 30965760, 319979520.
    }
      \\
  \\
  \]
  If $X$ is a smooth cubic fourfold over $\F_2$, then the order of
  $\Aut_{\F_2}(X)$ is one of the following 40 possibilities:
  \[
    \numberlist{1, 2, 3, 4, 5, 6, 7, 8, 9, 10, 12, 15, 16, 18, 24, 30, 32,
      36, 48, 64, 72, 96, 108, 120, 128, 144, 160, 192, 288, 384, 512, 576,
      648, 1440, 1536, 2160, 4608, 10752, 23040, 1451520.}
  \]
\\
\end{theorem}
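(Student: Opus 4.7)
The plan is to deduce the theorem entirely from the orbit-and-stabilizer database produced by Algorithm~\ref{algorithm:main}, together with the identification provided by Proposition~\ref{prop:automorphisms}. Since $n=4 \geq 3$, $d=3 \geq 3$, and $\gcd(\abs{\F_2^\times},3)=1$ (every element of $\F_2^\times$ is trivially a cube), the second part of Proposition~\ref{prop:automorphisms} gives $\Aut_{\F_2}(X) \isom (\GL_6(\F_2))_f$ for any nonzero $f \in \Sym^3(\F_2^6)$ defining $X$. Hence the theorem reduces to tabulating the orders of $\GL_6(\F_2)$-stabilizers over a complete set of orbit representatives.

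First I would run Algorithm~\ref{algorithm:main} with the Waring filtration $0 \subset W_1 \subset W_2 \subset V$ described in \S\ref{sec:countinghypersurfaces}, producing the $3\,718\,649$ orbit representatives. The stabilizer $G_f$ of each representative is a byproduct of the recursive structure: Lemma~\ref{lemma:stabilizers}(\ref{lemma:stabilizers3}) guarantees that stabilizers only shrink along the projections $V \to V/W_1 \to V/W_2$, so one obtains the stabilizer of $\pi_{V/W_2}(f)$ from the deepest recursive call, narrows to the stabilizer of $\pi_{V/W_1}(f)$ by a stabilizer computation of a smaller group acting on an affine subspace isomorphic to $W_2/W_1$, and finally narrows to $G_f$ by a stabilizer computation on an affine subspace isomorphic to $W_1$. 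Each of these steps is tractable in \texttt{Magma} because the groups and ambient affine spaces are small.

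Next I would collect the multiset $\{\abs{G_f}\}$ ranging over the orbit representatives and read off the underlying set of distinct values to obtain the first list of $87$ orders. For the second part I would filter the representatives for smoothness, which is a routine check that $f$ and its partial derivatives $\partial f/\partial x_i$ have no common zero in $\P^5_{\F_2}$; applied to the $3\,718\,649$ representatives this isolates the $1\,069\,562$ smooth classes referenced in Theorem~\ref{thm:smooth}. Restricting the stabilizer-order tabulation to this sublist then gives the $40$ listed values.

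The main obstacle is purely computational: producing the complete list of orbit representatives with correct stabilizers within feasible runtime and memory. This is exactly the difficulty overcome by the filtration method of \S\ref{sec:filtrations}, which the authors report finishes in under $100$ minutes on a household laptop. Once the database is in hand, no further mathematical content is required—the theorem is obtained by sorting and deduplicating the stabilizer orders in the full list and in its smooth sublist.
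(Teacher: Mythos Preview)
Your proposal is correct and follows essentially the same approach as the paper: identify $\Aut_{\F_2}(X)$ with the $\GL_6(\F_2)$-stabilizer via Proposition~\ref{prop:automorphisms}, compute stabilizers as a byproduct of the filtration method using the nested identity $G_f = ((G_{\pi_2(f)})_{\pi_1(f)})_f$ from Lemma~\ref{lemma:stabilizers}(\ref{lemma:stabilizers3}), and then tabulate orders over all (respectively, smooth) orbit representatives. The paper presents the theorem as a summary of exactly this computation, with no additional argument.
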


\begin{remark}[An extremal cubic fourfold]
\label{rem:DKHK}
The smooth cubic fourfold 
  \begin{equation} 
    X_1 : \ x_0x_3^2 +x_1x_4^2 + x_2x_5^2+x_0^2x_3 + x_1^2x_4 + x_2^2 x_5=0
  \end{equation}
  is the unique cubic fourfold over $\F_2$ with
  $\abs{\Aut(X_1)(\F_2)}=1\,451\,520$; in fact, our stabilizer computations
  yield that the $\F_2$-automorphisms form a group isomorphic to the
  symplectic group $\mathrm{Sp}(6, \F_2)$, and this is the largest
  $\F_2$-automorphism group of all smooth $\F_2$-cubic fourfolds. This
  cubic fourfold was also studied in~\cite{dolgachev_kondo,
  hulek_kloosterman}.

  The appearance of the symplectic group admits a simple explanation. We
  consider the $\F_4/\F_2$-Hermitian form defined by
  \[
    H(x, y) :=
    x_0y_5^2 + x_1y_4^2 + x_2 y_3^2 + x_3 y_4^2 + x_4 y_1^2 + x_5 y_0^2
  \]
  where $x = (x_0, \dotsc, x_5)$ and $y = (y_0, \dotsc, y_5)$ are in
  $\F_4^6$.  The map from Hermitian forms to cubic forms defined by
  $H(x, y) \mapsto H(x, x)$ is injective, so in particular, $g \in
  \GL_6(\F_4)$ fixes $H(x, y)$ if and only if it
  fixes $H(x, x)$. We observe that the group of
  $\F_2$-rational points of the unitary group of $H$ is isomorphic to
  $\operatorname{Sp}(6, \F_2)$.
  \end{remark}  

\begin{remark}[The Fermat cubic fourfold]
\label{rem:Fermat}  
There is also a unique smooth cubic fourfold $X_2$ with
  $\abs{\Aut(X_2)(\F_2)} = 23040$, the {\it Fermat cubic fourfold}
  \[
X_2 : \ x_0^3 + x_1^3 + x_2^3 + x_3^3 + x_4^3 + x_5^3=0.
\]
  Incidentally, there are also three singular cubic fourfolds with an
  automorphism group of this order, two of which have
  $\F_2$-automorphism group isomorphic to $\Aut_{\F_2}(X_2)$. One easily
  sees that the cubic $X_1$ is an $\F_2$-form of the Fermat cubic $X_2$,
  split by any $K/\F_2$ containing a primitive third root of unity; in
  particular, $X_1 \times_{\F_2} \F_4 \cong X_2 \times_{\F_2} \F_4$.
\end{remark}

\subsection{Point counting on moduli spaces}\label{ss:countingcubics}
Let $U_d$ be the the {\it discriminant complement} in the Hilbert
scheme of degree $d$ hypersurfaces over $\F_q$, i.e., $U_d$ is the
open subscheme of $\P_{\F_q}^{\binom{d+n}{d}}$ parametrizing smooth
degree $d$ hypersurfaces in $\P^n_{\F_q}$. An relevant question is:
what is the probability that a randomly chosen hypersurface is smooth?
In~\cite{poonen}, Poonen gave an answer asymptotically in $d$,
proving that

\[\lim_{d\to \infty} \frac{\abs{U_d(\F_q)}}{\abs{\P^{\binom{d+n}{d}}(\F_q)}} = \frac{1}{Z_{\P^n}(n+1)} = \prod_{1 \le k \le n }\left(1-\frac{1}{q^k}\right). \]

Poonen's result on point counting on discriminant complements has been
related to the phenomena of stabilization in the Grothendieck ring in
work of Vakil and Wood~\cite{vakil_wood}, and has also been studied
from the perspective of homological and representation stability by
Church, Ellenberg, and Farb~\cite{church_ellenberg_farb} and by
Howe~\cite{howe}.

 In the case of hypersurfaces in $\P^5$ over $\F_2$, Poonen's result says that the probability as $d \to \infty$ that a randomly chosen hypersurface of degree $d$ is smooth should be 
\begin{equation}\label{eqn:poonenprob}
 \prod_{1 \le k \le 5 }\left(1-\frac{1}{2^k} \right)= 0.298004150390625 
 \end{equation}
 Our computations let us compute $\abs{U_3(\F_2)}$ exactly, and therefore
 the probability that a randomly chosen {\it cubic} hypersurface in
 $\P_{\F_2}^5$ is smooth.

We first give some point counts on a series of related moduli spaces:
\begin{alignat*}{5}
  & \mathcal{C},  &&\text{ \ the coarse moduli space of cubic fourfolds}, \\ 
  &\mathcal{C}^{sm}, &&\text{ \ the coarse moduli space of smooth cubic fourfolds}, \\
  &\mathscr{C}, &&\text{ \ the moduli stack of cubic fourfolds, and}\\
  &\mathscr{C}^{sm}, &&\text{ \ the moduli stack of smooth cubic fourfolds}.
\end{alignat*}

The first point count can be computed without any difficulty using the Burnside formula~\eqref{eqn:burnside}
\[\abs{\mathcal{C}(\F_2)} = 3\,718\,649.\]
 
Next we compute the number of smooth cubic fourfolds. One cannot
determine the number of smooth orbits easily from Burnside's
lemma. Instead, we directly count the smooth obits using our database
of explicit orbit representatives. We can thus report that the number of
smooth cubic fourfolds up to isomorphism is
\[
\abs{\mathcal{C}^{sm}(\F_2)} = 1\,069\,562, 
\]
or about $28.76\%$.

Using our computations of the automorphism subgroups discussed above,
we determine the stacky point counts as well:
\[
  \abs{\mathscr{C}(\F_2)} =
  \sum_{X \in \mathcal{C}(\F_2)} \frac{1}{\abs{\Aut_{\F_2}(X)} } = \frac{4803839602528529}{1343913984} \approx 3\,574\,514.18746, 
\]
and
\[
 \abs{\mathscr{C}^{\sm}(\F_2) } =  \sum_{X \in \mathcal{C}^{sm}(\F_2)} \frac{1}{\abs{\Aut(X)}} = 1\,048\,581.
\]
It is interesting to note that the stacky point count of $\mathscr{C}^{\sm}$ is an integer.

Finally, summing the sizes of the orbits $[\PGL_6(\F_2) : \Aut_{\F_2}(X)]$ gives the $\F_2$-point count on the discriminant complement $U_3$.

\begin{theorem}\label{thm:discriminantcomplement}  
The cardinality of the set of smooth cubic fourfolds over $\F_2$ (not
considered up to isomorphism) is
\[
\abs{U_3(\F_2)} = 21\,138\,040\,038\,850\,560.
\] 
and thus the probability that a random cubic fourfold over $\F_2$ is smooth is exactly
\[
\frac{\abs{U_3(\F_2)}}{\abs{\P^{56}(\F_2)}}  =
0.29334923433225412736646831035614013671875.
\]

\end{theorem}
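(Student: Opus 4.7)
The plan is a direct application of orbit--stabilizer, leveraging the database of orbit representatives and stabilizer orders produced in \S\ref{sec:countinghypersurfaces}. By Proposition~\ref{prop:hypersurfaceautsF2}, since $q-1=1$ is coprime to $d=3$, the nonzero $\GL_6(\F_2)$-orbits on $V = \Sym^3(\F_2^6)$ are in bijection with the $\F_2$-isomorphism classes of cubic fourfolds, and by Proposition~\ref{prop:automorphisms} the $\GL_6(\F_2)$-stabilizer of a form $f$ coincides with $\Aut_{\F_2}(X)$, where $X = V(f)$. Because $\F_2^\times$ is trivial, every line in $V$ is a single nonzero vector, so $|U_3(\F_2)|$ is exactly the number of nonzero $f \in V$ for which $V(f)$ is smooth. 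Partitioning this set into $\GL_6(\F_2)$-orbits gives
\[
|U_3(\F_2)| \;=\; \sum_{[X] \in \mathcal{C}^{sm}(\F_2)} \frac{|\GL_6(\F_2)|}{|\Aut_{\F_2}(X)|}.
\]

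First I would restrict the database to the subset of orbit representatives cutting out smooth hypersurfaces, detecting smoothness via the Jacobian criterion on each representative. Second, using the stabilizer orders recorded alongside the enumeration (computed via the successive refinement $G_f = ((G_{\pi_2(f)})_{\pi_1(f)})_f$ indicated after Proposition~\ref{prop:automorphisms}), I would evaluate the sum above term by term to obtain the asserted value of $|U_3(\F_2)|$. Finally, to obtain the probability I would divide by $|\P^{56}(\F_2)| = 2^{56}-1$, the total number of nonzero cubic forms in $V$, producing the stated decimal.

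A useful consistency check, which incidentally explains why the stacky count $|\mathscr{C}^{sm}(\F_2)| = 1\,048\,581$ is an integer, is the identity
\[
|U_3(\F_2)| \;=\; |\GL_6(\F_2)| \cdot |\mathscr{C}^{sm}(\F_2)|,
\]
where $|\GL_6(\F_2)| = \prod_{i=0}^{5}(2^6 - 2^i) = 20\,158\,709\,760$. Multiplying by the previously computed stacky count recovers the claimed $|U_3(\F_2)|$, giving an independent cross-check.

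There is no conceptual obstacle here: everything reduces to orbit--stabilizer once the database is in hand. The main practical concern will be ensuring the completeness of the list of smooth orbit representatives and the correctness of their recorded stabilizer orders, both of which are byproducts of the filtration-method enumeration already carried out and validated in \S\ref{sec:countinghypersurfaces}.
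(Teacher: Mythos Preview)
Your proposal is correct and matches the paper's approach exactly: the paper obtains $\abs{U_3(\F_2)}$ by ``summing the sizes of the orbits $[\PGL_6(\F_2) : \Aut_{\F_2}(X)]$'' over the smooth orbit representatives, which is precisely your orbit--stabilizer computation (recall $\GL_6(\F_2)=\PGL_6(\F_2)$). Your additional cross-check via $\abs{\GL_6(\F_2)}\cdot\abs{\mathscr{C}^{\sm}(\F_2)}$ is a nice bonus that the paper does not spell out.
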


We compare this count to some other counts of the proportion of smooth
small degree hypersurfaces over $\F_2$, see Figure~\ref{fig:poonen-counts}.

\begin{figure}[h]
\renewcommand{\arraystretch}{1.5}
  {\tiny \begin{tabular}{c|ccccc|c}
    ~\;~$n \backslash d$ & 1 & 2 & 3 & 4 & 5  & $ \displaystyle\prod_{ 1 \le k \le n+1 }\left(1-\frac{1}{2^k} \right)$  \\ \hline
    1 & 1 & $\frac{4}{9} \approx 0.444$ & $\frac{112}{341} \approx 0.328$ &
    $\frac{1560}{4681} \approx 0.333$ & $\frac{98304}{299593} \approx 0.328$ &
    $\frac{3}{8} = 0.375$  \\
    2 & 1 &  $\frac{448}{1023}\approx0.438$ & $\frac{21504}{69905} \approx 0.308$
    & $\frac{10590854400}{34359738367} \approx 0.308$ & &  $\frac{21}{64} \approx 0.328$  \\
    3 & 1 & $\frac{64}{151} \approx 0.424$ & $\frac{330301440}{1108378657}
    \approx 0.298$ & &  &  $\frac{315}{1024} \approx 0.308$  \\
    4 & 1 & $\frac{126976}{299593} \approx 0.424$ &
    $\frac{1409202669256704}{4803839602528529} \approx 0.293$  &  &  &
    $\frac{9765}{32768} \approx 0.298$  \\
  \end{tabular} }
  \caption{List of proportion of hypersurfaces of degree $d$ in $\P^{n+1}$ that are smooth, over $\F_2$. The last column is the asymptotic proportion as $d \to \infty$ from Poonen's theorem. The proportion of cubic fourfolds which are smooth is closer to the Poonen limit than any other entry in the table.} 
  \label{fig:poonen-counts}
\end{figure}

\subsection{Lines on cubic fourfolds}\label{ss:linearspaces}
We compute the set of $\F_2$-lines on every cubic
fourfold. We find that there exist
exactly 65 cubic fourfolds which contain exactly one $\F_2$-line, only
29 of which are smooth (the first example of such a cubic was given
in~\cite{debarre_laface_roulleau}.) Our exhaustive computations confirm that every cubic
fourfold $X$ contains an odd number of $\F_2$-lines (so in particular they all contain at least one such line). In fact, this was already proved by Debarre--Laface--Roulleau.

\begin{lemma}[{\cite{debarre_laface_roulleau}}]
\label{lem:containsline}
  Every cubic fourfold over $\F_2$ contains a line defined over $\F_2$.
\end{lemma}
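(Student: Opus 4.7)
The strategy is to apply the projective Chevalley--Warning theorem twice: once to locate an $\F_2$-point $p$ on $X$, and then once more on a derived variety to produce an $\F_2$-line through $p$. For the first step, $X \subset \P^5_{\F_2}$ is a cubic hypersurface with $\deg X = 3 \le 5 = \dim \P^5$, so Chevalley--Warning gives $\abs{X(\F_2)} \equiv \abs{\P^5(\F_2)} = 63 \equiv 1 \pmod{2}$, hence $X(\F_2) \neq \emptyset$.  Fix $p \in X(\F_2)$.

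To find a line through $p$ on $X$, parametrize lines by directions $v \in \P(\F_2^6/\langle p \rangle) \cong \P^4$ and expand the defining cubic $f$ along $\{sp+tv\}$. Using $f(p)=0$, we obtain $f(sp+tv) = s^2 t\, \ell_p(v) + st^2\, q_p(v) + t^3 f(v)$, where $\ell_p$ is a linear form (the tangent form to $X$ at $p$), $q_p$ is quadratic, and $f(v)$ is cubic in $v$.  The line $\overline{pv}$ lies on $X$ if and only if $\ell_p(v) = q_p(v) = f(v) = 0$.  When $p$ is a smooth point of $X$, $\ell_p$ is nonzero and the lines-through-$p$ locus $F_p$ sits inside $\P^3 = \{\ell_p=0\}$ as the intersection $Q_p \cap C_p$ of a quadric surface and a cubic surface, a sextic curve of arithmetic genus $4$ generically.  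When $p$ is singular on $X$, $\ell_p$ vanishes identically, and $F_p$ is the intersection of a quadric threefold and a cubic threefold inside $\P^4$.

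The main obstacle is showing $F_p(\F_2)\neq\emptyset$, since Chevalley--Warning does not apply directly to an intersection of a degree-$2$ and a degree-$3$ hypersurface in $\P^3$ and the Hasse--Weil bound is vacuous for genus-$4$ curves over $\F_2$.  The plan is a case analysis on the geometric type of the quadric $Q_p$ over $\F_2$: if $Q_p$ is degenerate (a cone or a union of two planes), then $Q_p$ contains an $\F_2$-ruling $\ell \cong \P^1_{\F_2}$ on which $C_p$ restricts to a cubic form, and a parity/pigeonhole argument over the rulings yields an $\F_2$-root; if $Q_p$ is a smooth quadric surface, then either $Q_p \cong \P^1_{\F_2} \times \P^1_{\F_2}$ (hyperbolic case, with $9$ rational points) and $F_p$ is a bidegree-$(3,3)$ curve accessible by direct point-counting on $Q_p(\F_2)$, or $Q_p$ is the $\F_4/\F_2$-twist with $5$ rational points, handled similarly.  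When every $\F_2$-point of $X$ is singular, an analogous but dimension-shifted argument applies in $\P^4$.

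The sharpest form of the lemma, as hinted by the preceding empirical remark in the paper, is the parity identity $\abs{F_p(\F_2)} \equiv 1 \pmod{2}$ for every $X$ and every $p \in X(\F_2)$.  My favored route for this parity, bypassing the case analysis, is to write the indicator $[L\subset V(f)] = \prod_{i=1}^4 (1 + (f\vert_L)_i)$ over $\F_2$, expand $N(f) = \sum_L [L \subset V(f)] \pmod{2}$ as a polynomial of degree $\le 4$ in the coefficients of $f$, note that the constant term is $651 \equiv 1$, and show that the line-sum of each nontrivial monomial contribution vanishes modulo $2$ as a consequence of the combinatorics of the $\GL_6(\F_2)$-action on $G(2,6)(\F_2)$. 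The hard part of this parity approach, and indeed of the entire argument, is controlling these monomial sums, since they encode delicate invariants of the space of $\F_2$-lines in $\P^5$.
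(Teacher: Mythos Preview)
Your approach is entirely different from the paper's, and it has a genuine gap.

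The paper (following Debarre--Laface--Roulleau) does not fix a point and search for lines through it. Instead it invokes the Galkin--Shinder relation
\[
\abs{F_1(X)(\F_2)} = \frac{\abs{X(\F_2)}^2 + 2(1+2^4)\abs{X(\F_2)} + \abs{X(\F_4)}}{8} + 4\,\abs{X^{\sing}(\F_2)},
\]
so that Chevalley--Warning (giving $\abs{X(\F_2)}\ge 1$) forces the right-hand side to be positive. One line, no case analysis.

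Your fixed-point strategy cannot be completed as written. You attempt to show that for a chosen $p\in X(\F_2)$ the locus $F_p$ of $\F_2$-lines through $p$ is nonempty, and you even assert the parity $\abs{F_p(\F_2)}\equiv 1\pmod 2$ for \emph{every} $p$. This is false. The paper records that there are $65$ cubic fourfolds over $\F_2$ (of which $29$ are smooth) containing exactly one $\F_2$-line; on any such $X$, all but three of the $\F_2$-points lie on no $\F_2$-line at all, so $\abs{F_p(\F_2)}=0$ for those $p$. Hence no case analysis on the quadric $Q_p$ can succeed uniformly in $p$, and the intermediate claim you highlight as ``the sharpest form of the lemma'' is simply wrong. (The paper's parity statement, which you may be thinking of, concerns the \emph{total} number of $\F_2$-lines on $X$, not the number through a fixed point.)

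Your alternative global parity route---expanding $\sum_{L\in G(2,6)(\F_2)}[L\subset X]$ as a polynomial in the coefficients of $f$ and arguing mod $2$---is aimed at the correct statement, but you explicitly leave the key monomial-sum cancellations unproved. That is exactly the content that the Galkin--Shinder formula packages for free.
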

In~\cite{debarre_laface_roulleau}, this result is derived using a formula of Galkin and Shinder~(\cite[Equation 8]{debarre_laface_roulleau}, \cite{galkin_shinder}): there is a relation between point counts on $X$ 
and its Fano variety of lines $F_1(X)$, which in the case of cubic fourfolds over $\F_2$ yields
\begin{equation}
 \abs{F_1(X)(\F_2)} = \frac{\abs{X(\F_2)}^2 +2(1+2^4)\abs{X(\F_2)} +\abs{X(\F_4)}}{8} \ + \  4 \ \abs{\text{Sing}(X)(\F_2)}.
\end{equation}
In particular, since $\abs{X(\F_2)} \ge 1$ (by Chevalley--Warning), one has $\abs{F_1(X)(\F_2) }\ge 1$.

 If one is only interested in the {\it number} of lines on a cubic fourfold then the above formula suffices provided one computes point counts on $X$ (as we do in~\S\ref{sec:zeta} below), but we still computed the full set of $\F_2$-lines on each cubic with other applications in mind---for instance, if one is interested in searching for various {\it families} of lines on cubics, like planes and scrolls, it is useful to know the full set of lines.

 We plot a histogram, see Figure~\ref{fig:cubic-lines},
 of the count (weighted by stabilizer) of the number of isomorphism
 classes of cubics containing a given number of lines.  We also plot the same
 histogram for just the smooth cubics. The histograms match the
 prediction from~\cite{debarre_laface_roulleau}.

\begin{figure}[h]
  \centering
  \includegraphics[scale=0.3]{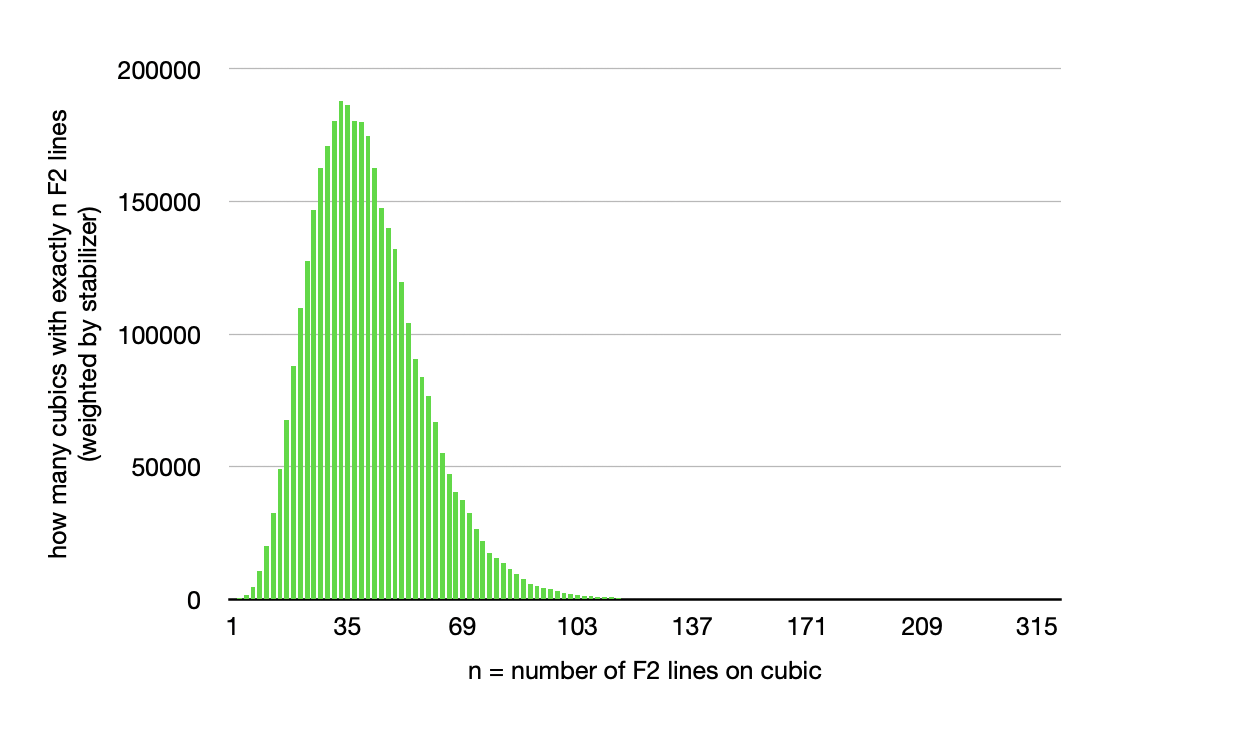}
  \includegraphics[scale=0.3]{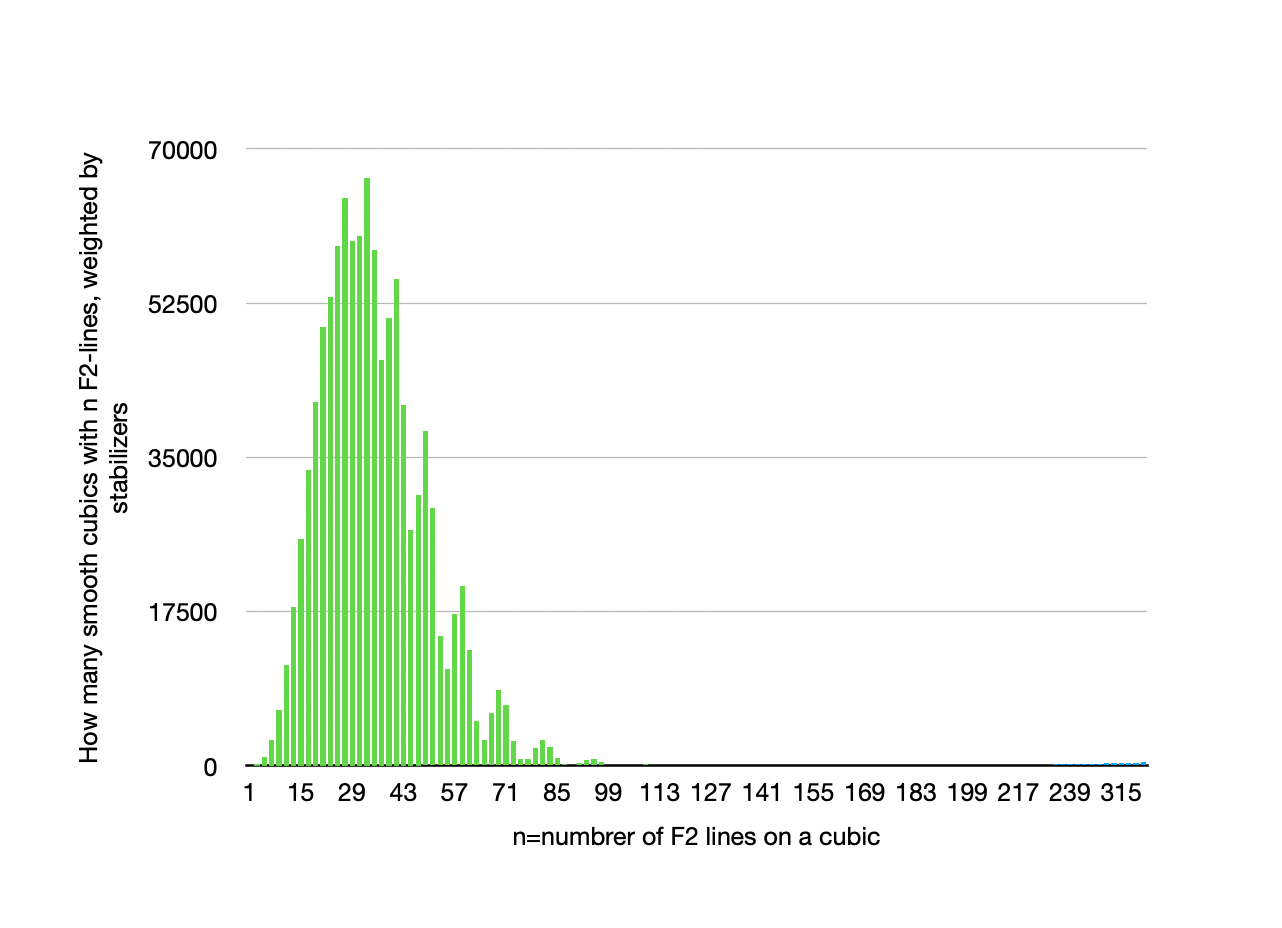}  
  \caption{Stacky counts of the number of cubics containing $n$ lines.
  A plot restricting to smooth cubics is given on the right.}
  \label{fig:cubic-lines}
\end{figure}

\begin{remark}
The maximal number of $\F_2$ lines on a smooth cubic fourfold is
$315$.  The extremal cubic fourfold $X_1$ in Remark~\ref{rem:DKHK} is
the unique smooth cubic fourfold over $\F_2$ with $315$ lines.
\end{remark}

\subsection{Planes on cubic fourfolds}

We also compute the complete set of $\F_2$-planes on every cubic
fourfold over $\F_2$. In contrast to the case of lines, not every
cubic has a plane; indeed, cubic fourfolds containing plane live on the
Noether--Lefschetz divisor $\mathcal{C}_8 \subset \mathcal{C}$. There
are $2\,116\,029$ cubic fourfolds, or $56.90\%$ up to isomorphism,
containing at least one $\F_2$-plane, of which $702,153$ are smooth,
or $65.65\%$ of the smooth cubic fourfolds up to
isomorphism. Figure~\ref{fig:cubic-planes} shows the histograms
recording how many cubic fourfolds (respectively, smooth cubic fourfolds)
contain a fixed number of $\F_2$-planes.

\begin{figure}[h]
  \centering
  \includegraphics[scale=0.3]{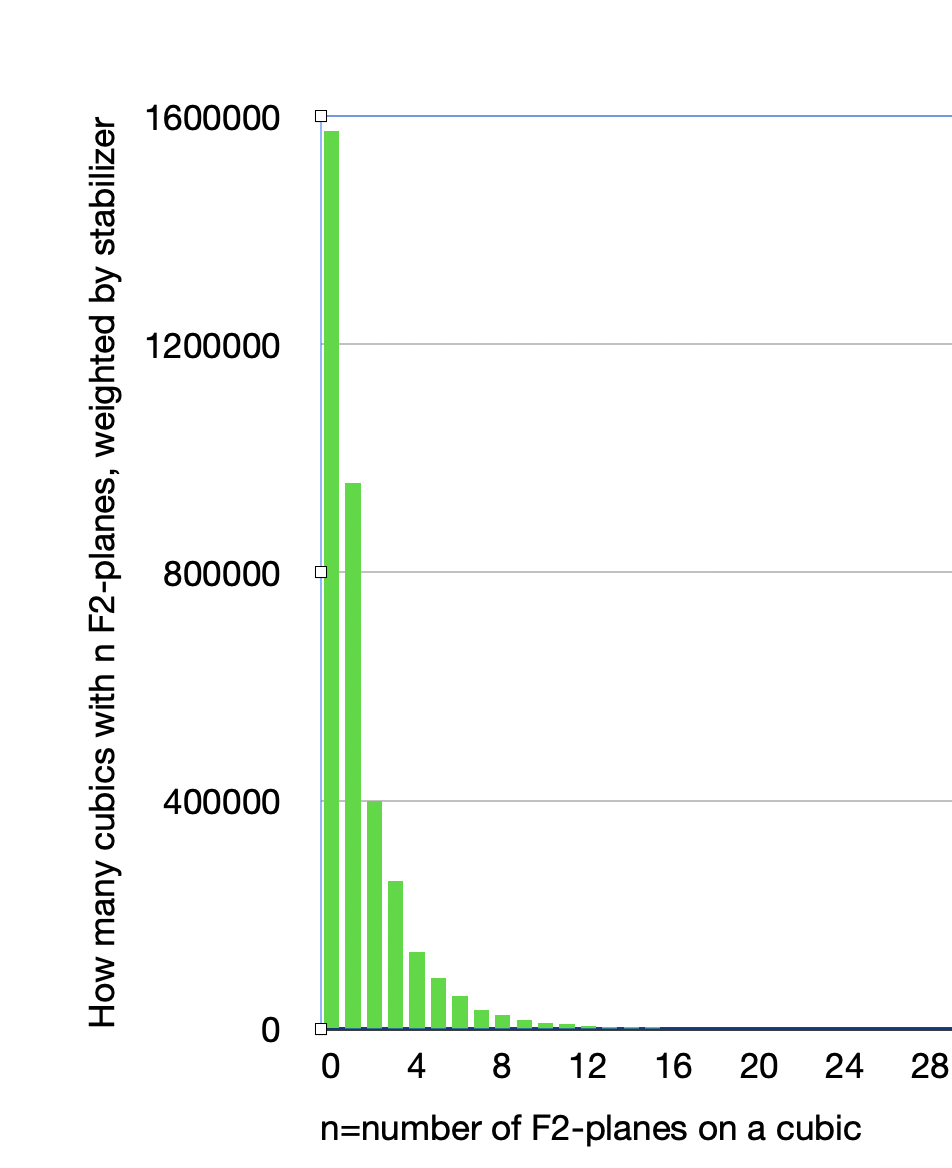} \text{ {} \ \ \ \ {}}
  \includegraphics[scale=0.48, trim=0 1.55cm 0 0]{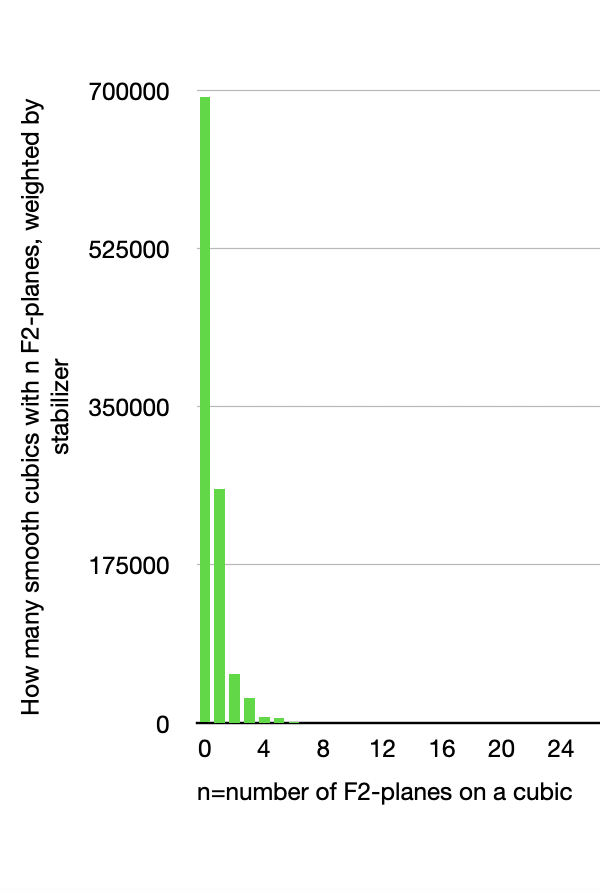}
  
  \caption{Stacky counts of the number of cubics containing $n$ planes.
  A plot restricting to smooth cubics is given on the right.}
  \label{fig:cubic-planes}
\end{figure}

As mentioned in the introduction, the rationality problem for cubic fourfolds has been a primary motivation for their study over the last 50 years. If $X$ contains a pair of disjoint $\F_2$-planes $P_1$ and $P_2$, then $X$ is birational to $P_1 \times P_2$. Counting the cubics in our database with pairs of disjoint $\F_2$ planes gives a lower bound on the number of rational cubic fourfolds over $\F_2$:
\begin{comp}
There are $429\, 744$ isomorphism classes of cubic fourfolds in
$\mathcal{C}(\F_2)$ containing two disjoint $\F_2$-planes, and $36\, 572$ of these cubics are smooth.
\end{comp}

\section{Zeta functions of cubic fourfolds}
\label{sec:zeta}

In this section we explain how we compute the zeta functions of all smooth cubic fourfolds over $\F_2$. The reader interested in the results of the computation is advised to skip to~\S\ref{ss:mainresultsonzeta}.  

\subsection{Computational methods}
\label{ss:zetamethods}

Let $q=2$, and let $F\colon X_{\overline{\F}_q} \to
X_{\overline{\F}_q} $ be the relative Frobenius endomorphism.  By the
Weil conjectures, the zeta function for a smooth cubic fourfold
$X/\F_q$ is given by
\[  Z_X(q^{-s} ) = \frac{1}{(1-q^{-s})(1-q^{1-s})(1-q^{2-s})Q_X(q^{-s})(1-q^{3-s})(1-q^{4-s})} = \frac{Z_{\P^4}(q^{-s})}{Q_X(q^{-s})}
\]
where the
interesting factor $Q_X(t)$ is given by, for an odd prime $\ell$,
\[
Q_X(t) = \det\bigl(\text{Id}-tF^*\mid H^4_{\text{\'{e}t,prim}}(X_{\overline{\F}_2}, \Q_\ell) \bigr).
\]

Let us now describe how we efficiently compute the zeta functions of cubic fourfolds. Our code computes the Weil polynomials 
\[
P_X(t)=\det\bigl( F^* - t \text{Id} \mid  H^4_{\text{\'{e}t,prim}}(X_{\overline{\F}_2},
\Q_\ell(2)) \bigr)
\]
for each isomorphism class of smooth cubic fourfold $X/\F_2$ in our
database; each $P_X(t)$ is a monic, degree 22 polynomial with
coefficients in $\frac{1}{2}\Z$ whose roots lie on the unit circle.
The Weil polynomial $P_X(t)$ is related to $Q_X(t)$ via $P_X(t) = \pm
Q_X(t/4)$ where the sign is the sign occurring in the functional
equation, see~\eqref{eqn:functional} below. Using a slight adaptation
of the algorithm of Addington--Auel in~\cite{addington_auel} (described
below), we can efficiently compute the point counts of smooth (and
even mildly singular) cubic fourfolds over $\F_2$.  Then for each
smooth cubic $X$, we compute the first 11 nonleading coefficients of
$P(t)$ using the points counts $\abs{X(\F_{2^k})}$ for $1 \le k \le 11$.

The 11 remaining coefficients are computed by leveraging the functional equation 
\begin{equation}\label{eqn:functional}
P_X(t) =(-1)^{\epsilon} t^{11} P_X(t^{-1})
\end{equation}
to fully determine $P_X(t)$.  However, to determine the sign $\epsilon \in \{0,1\}$ 
of the functional equation, we use work of T.\ Saito~\cite{saito} to
relate it to the {\it divided discriminant} $\text{disc}_d(F)$ of an
integral homogeneous cubic lift $f\in \Z[x_0, \ldots, x_5]$ 
(see~\cite[Definition 2.2]{saito} for the definition of
$\text{disc}_d(F)$).

\begin{theorem}[{\cite[\S4]{saito}}]
Let $X$ be a smooth cubic fourfold over $\F_2$, and let $f \in \Z
[x_0, \ldots, x_5]$ be a lift of a defining equation for $X$. Then the
sign of the functional equation in the zeta function of $X$ is
$(-1)^{(\mathrm{disc}_d(F)+1)/{4}}$.
\end{theorem}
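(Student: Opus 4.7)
My plan is to convert the sign of the functional equation into a cohomological Frobenius determinant, then invoke Saito's general discriminant formula for smooth hypersurfaces, and finally specialize to $(n,d,q)=(4,3,2)$.

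First I would identify $\epsilon$ as a determinant of Frobenius. With the normalization $P_X(t)=\det(F^*-t\,\mathrm{Id}\mid H^4_{\text{\'{e}t,prim}}(X_{\overline{\F}_2},\Q_\ell(2)))$, Poincar\'e duality and hard Lefschetz equip $H^4_{\text{\'{e}t,prim}}$ with a Frobenius-invariant nondegenerate symmetric pairing (after the Tate twist by $\Q_\ell(2)$), forcing $F^*$ to act as an orthogonal automorphism so that $\det(F^*\mid H^4_{\text{\'{e}t,prim}})\in\{\pm 1\}$. Comparing $P_X(t)$ against $t^{\deg P_X}\,P_X(t^{-1})$ and pulling out this determinant identifies $(-1)^\epsilon$ with $\det(F^*\mid H^4_{\text{\'{e}t,prim}})$ up to a fixed sign depending only on $\deg P_X$. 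This reduces the problem to computing a single $\pm 1$-valued cohomological invariant.

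Second I would apply Saito's main theorem, which expresses $\det(F^*\mid H^n_{\text{\'{e}t,prim}})$ of a smooth hypersurface of dimension $n$ over $\F_q$ as an explicit $\pm 1$-valued function of any integral lift $F$ of the defining equation. The engine of the proof is a global/local epsilon-factor calculation: the global determinant is written as a product of local contributions at primes in the discriminant locus of the universal family over $\mathrm{Spec}\,\Z$, and after a delicate $2$-adic analysis these contributions assemble into a symbol depending only on Saito's \emph{divided discriminant} $\mathrm{disc}_d(F)$. The normalization of $\mathrm{disc}_d(F)$ is arranged so that $\mathrm{disc}_d(F)\equiv -1\pmod{4}$ automatically, hence $(\mathrm{disc}_d(F)+1)/4$ is a well-defined integer and the residual sign is captured precisely by its parity.

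The main obstacle is Saito's theorem itself, which rests on the Deligne--Laumon formalism of local $\ell$-adic epsilon factors, the product formula for global epsilon constants of $\ell$-adic sheaves on $\mathrm{Spec}\,\Z$, and a painstaking $2$-adic analysis of the universal discriminant (doubly delicate since the residue characteristic we care about is precisely~$2$). Granting this input, the stated theorem is the arithmetic specialization at $(n,d,q)=(4,3,2)$. As an independent sanity check, I would pick several smooth cubic fourfolds from the database of \S\ref{sec:surveyingthedata}, compute the full Weil polynomial $P_X(t)$ from the point counts $\abs{X(\F_{2^k})}$ for $k=1,\ldots,\deg P_X$, read off $\epsilon$ directly, and verify the predicted congruence on $\mathrm{disc}_d(F)$ in each case.
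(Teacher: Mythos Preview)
The paper does not give its own proof of this statement: it is quoted directly from Saito~\cite{saito} as an external input and used as a black box to determine the sign $\epsilon$ before applying the point-count data. So there is no ``paper's own proof'' to compare against; your outline is essentially a sketch of the ideas behind Saito's theorem itself (Frobenius determinant on primitive cohomology, then the $\epsilon$-factor/product-formula computation reducing to the divided discriminant), which is consistent with what is in~\cite{saito} and more than what the present paper attempts.
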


Applying Saito's criterion to the smooth cubics in our database, one finds that nearly half of all smooth cubic fourfolds take the positive sign in their functional equation. 

\begin{comp}
Among the $1\, 069\, 562$ isomorphism classes of smooth cubic
fourfolds over $\F_2$, there are exactly $531\, 334$, or about $49.8\%$,  for which the sign of the functional equation is $+1$.
\end{comp}

\subsection{Addington--Auel point counting algorithm}
For the sake of documentation, we describe our adaptations to the
point counting algorithm of Addington and
Auel~\cite[\S3]{addington_auel}, the idea of which itself goes
back (for counting points on cubic threefolds) to
Bombieri--Swinnerton-Dyer~\cite{bombieri} and was used by
Debarre--Laface--Roulleau~\cite[\S4.3]{debarre_laface_roulleau}.
Our improvements are as follows. First, we no longer require that the
cubic fourfold $X$ contains an $\F_2$-line not also contained in a
plane $P \subset X \times \overline{\F}_2$.  In fact, if $X$ is
smooth, such lines exists aside from $55$ cubics in our database.
Second, we remove the hypothesis that $X$ is smooth.

The key step in the algorithm is to transform $X$ into a conic fibration.
We denote $\P^5 := \operatorname{Proj} \F_2[y_0, \dots, y_5]$ and
$X \subset \P^5$ an arbitrary cubic fourfold.
We let $\ell \subset X$ be a rational line (such a line must exist by Lemma~\ref{lem:containsline}) and change coordinates so that
$\ell = Z(y_0,y_1,y_2,y_3)$. Then the defining equation of $X$ is
\begin{equation}
  \label{eq:conic-fibration}
  \begin{alignedat}{9}
    & A(y_0, \ldots, y_3)y_4^2 && + B(y_0, \ldots, y_3)y_4y_5 && + C(y_0, \ldots, y_3)y_5^2 \\
    + & D(y_0, \ldots, y_3)y_4 && + E(y_0, \ldots, y_3)y_5 && + F(y_0, \ldots, y_3) &= 0
  \end{alignedat}
\end{equation}
where $A, \dots, F$ are homogeneous polynomials of degrees
$1,1,1,2,2,3$, respectively.  If $P \supset \ell$ is a plane, then
equation~\eqref{eq:conic-fibration} shows $P$ meets $X$ along $\ell
\cup C$, where $C$ is a plane conic.

We consider the projection $\phi\: X \dashrightarrow \P^3$ away from
$\ell$, as well as the resolution $\phi\: \wtilde X \rightarrow \P^3$
of $\phi$ and the blowing-down morphism $\pi\: \wtilde X \to
X$. Because $\phi$ is a linear projection, we have that $\pi^{-1}(x)$
is a linear space for every point $x \in X$. Specifically,
\[
  \pi^{-1}(x) \isom
  \begin{cases}
    \P^0 & \text{if } x \not \in \ell \\
    \P^2 & \text{if } x \in \ell, x \not \in X^\sing \\
    \P^3 & \text{otherwise}
  \end{cases}
  .
\]
Notice that $x \in \ell$ is a singularity if and only if the point
$(x_4 : x_5 : 0) \in \P^2$ is a basepoint of the $3$-dimensional
family of conics defined in \eqref{eq:conic-fibration}.  Thus,
depending on $A,B,C$, we have one of the following cases
\begin{itemize}
\item $0$ basepoints (on the hyperplane at infinity),
\item $1$ basepoint 
\item $2$ basepoints (both defined over $\F_2$)
\item $2$ basepoints (neither defined over $\F_2$)
\item A line of basepoints. ($A = B = C = 0$.)
\end{itemize}

\begin{algorithm}\label{algorithm:aa}
  \texttt{CountPoints}($X$, $q$) \quad 
  (Adapted from \cite[\S 3]{addington_auel})

\noindent\textbf{Input:}
  \begin{itemize}
  \item
    A cubic fourfold $X$

  \item
    $q = 2^r$
  \end{itemize}
\textbf{Steps:}

  \begin{enumerate} 
  \item
    Choose an $\F_2$-line $\ell \subset X$ (guaranteed by Lemma~\ref{lem:containsline}).
    
  \item
    The projection away from $\ell$ yields a morphism
    $\phi \colon \text{Bl}_\ell(X) \to \P^3$ whose generic fiber is a conic, as in
    equation~\eqref{eq:conic-fibration}. 

  \item
    If $A = B = C = D = E = 0$, then $\phi$ gives $X$ the structure of a cone over a cubic
    surface $Y$. In this case \\

    \noindent
    \textbf{return} $\abs{Y(\F_q)} \cdot q^2 + q + 1$. \\
    
  \item
    Compute $\abs{\text{Bl}_\ell(X)(\F_q)}$ by counting
    $\F_q$-points in each fiber of $\phi$: let
    \[
      \Delta = Z(AE^2 + B^2F + CD^2 - BDE) \subseteq \P^3
    \]
    be the discriminant subscheme
    parametrizing the fibers of $\phi$ which are either planes or
    singular conics.  For each $y \in \Delta(\F_q)$, the point-count
    $\abs{(\phi^{-1}(y))(\F_q)}$ is determined as follows:\\
    
    \begin{itemize}
    \item
      if $y \in \Delta$ and $y \notin Z(A, \ldots, F)$, then
      $\phi^{-1}(y)$ is a singular plane conic over $\F_q$.
      It has $0$, $q+1$, or $2q+1$ $\F_q$-points, which can be determined by
      its rank and Arf invariant. \\
      
    \item
      if $y \in Z(A, \ldots, F)$, then $\phi^{-1}(y)$ is a $2$-plane over
      $\F_q$ with $1+q+q^2$ $\F_q$-points. \\
    \end{itemize}
    Then
    \[
      \abs{\text{Bl}_\ell(X)(\F_q)} = \sum_{x \in \Delta(\F_q)}\abs{(\phi^{-1}(x))(\F_q)}
      + (q+1)(1+q+q^2+q^3-\abs{\Delta(\F_q)} ).
    \]

  \item
    Determine the exceptional divisor $E$ of $\pi$.
    \begin{itemize}
    \item
      If $x \in \ell$ is not a singular point of $X$, $\abs{\pi^{-1}(x)(\F_q)} = 1 + q + q^2$.
    \item
      If $x \in \ell$ is a singular point, $\abs{\pi^{-1}(x)(\F_q)} = 1 + q + q^2 + q^3$. \\
    \end{itemize}
    In the case that $\ell \subset X^{\sm}$, we have $\abs{E(\F_q)} = (1+q)(1+q+q^2)$. Thus,
    we may compute
    \[
      \abs{\text{Bl}_\ell(X)(\F_q)} = \abs{X(\F_q)} + \abs{E(\F_q)} - \abs{\P^1(\F_q)}.
    \]
    \textbf{return} $\abs{X(\F_q)}$.
  \end{enumerate}

\end{algorithm}

As in the original algorithm in~\cite{addington_auel}, we gain
significant speedup in the enumeration of $\Delta(\F_q)$ by taking the
projection $\Delta \backslash \{p\} \to \P^2$ away from a singular
point $p \in \Delta^{\sing}(\F_2)$ and enumerating over $\P^2(\F_q)$
points. We found in the process of running the point-counting
algorithm that every smooth cubic fourfold over $\F_2$ contains an
$\F_2$-line such that $\Delta^{\sing}(\F_2)$ is nonempty, answering a
question of~\cite{addington_auel} (see footnote~4 of \emph{loc. cit.}).

In fact, there is a simple way to confirm this is true. Choose a line
$\ell \subset X$ and write $X$ as in
Equation~\eqref{eq:conic-fibration}. Since $X$ is smooth, we have that
$A,B,C$ are independent linear forms, so there is a unique
$p = (p_0: p_1 : p_2 : p_3)$ at the intersection of the zero loci in
$\P^3$. The fiber $\phi^{-1}(p)$ in $X$ is given by
\[
  \ell'\: D(p) y_4 + E(p) y_5 + F(p) = 0, \ \ (y_0 : y_1 : y_2 : y_3) = p.
\]
That is, $\ell'$ is a line contained within $X$. The conic fibration
associated to $\ell'$ contains a rational conic of rank $1$ (the
associated fiber is the original line $\ell$ with multiplicity~$2$).

\subsection{Census of the zeta functions}
\label{ss:mainresultsonzeta}

Our computations of all the zeta functions of smooth cubic fourfolds
over $\F_2$ yield the following result.

\begin{comp}\label{comp:zetacount}
There are $86\,472$ distinct zeta functions realized among the
$1\,069\,562$ isomorphism classes of smooth cubic fourfolds over
$\F_2$.
\end{comp}

In doing the computation above, we also verified a conjecture of
Elsenhans and Jahnel~\cite[Theorem 1.9]{elsenhans_jahnel:duke} in the
case of cubic fourfolds over $\F_2$.
  
 \begin{theorem}Let $X$ be a smooth cubic fourfold defined over $\F_2$, and $P_X(t)$ its primitive Weil polynomial. Then $2P_X(-1)$ is an integer square.
\end{theorem}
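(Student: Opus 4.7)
The plan is to establish this theorem by exhaustive computational verification across all $1\,069\,562$ isomorphism classes of smooth cubic fourfolds over $\F_2$ produced by our filtration method. First, I would compute the full Weil polynomial $P_X(t) \in \tfrac{1}{2}\Z[t]$ for each such $X$ following the strategy of \S\ref{ss:zetamethods}: apply the adapted Addington--Auel algorithm (Algorithm~\ref{algorithm:aa}) to produce the point counts $\abs{X(\F_{2^k})}$ for $k = 1, \ldots, 11$, which fix the first $11$ non-leading coefficients of the monic degree-$22$ polynomial $P_X(t)$; then recover the remaining $11$ coefficients from the functional equation $P_X(t) = (-1)^{\epsilon} t^{22} P_X(t^{-1})$, with the sign $\epsilon$ determined by Saito's divided-discriminant criterion applied to an integral lift of the defining cubic form.

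Next, for each $X$ in the database I would compute the integer $2P_X(-1)$ (which lies in $\Z$ precisely because $P_X$ has coefficients in $\tfrac{1}{2}\Z$), and test whether it is a perfect square. A uniform affirmative answer across the census then yields the theorem for cubic fourfolds over $\F_2$; any single failure would instead refute the Elsenhans--Jahnel conjecture in this setting, so the check is simultaneously a meaningful test of the conjecture itself.

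The main obstacle is the computational cost of the first step: the point-counting routine must be invoked eleven times per cubic over base fields up to $\F_{2^{11}}$, with an input list of size on the order of $10^6$. This is rendered tractable by the speedups described immediately after Algorithm~\ref{algorithm:aa}, particularly the projection of the discriminant surface $\Delta \subset \P^3$ away from a singular $\F_2$-point of $\Delta$; the short argument at the end of \S\ref{ss:zetamethods} shows that every smooth cubic fourfold contains an $\F_2$-line whose associated conic fibration furnishes such a singular $\F_2$-point, so the speedup is always available. A secondary subtlety is the correct determination of $\epsilon$ via Saito's criterion, which requires lifting the defining equation to $\Z[x_0, \dots, x_5]$ and computing a divided discriminant; an incorrect sign here would contaminate the top $11$ coefficients and spuriously break the squareness test, so this step must be audited independently. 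Once these ingredients are in place, the verification runs to completion and the theorem follows.
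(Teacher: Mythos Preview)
Your proposal is correct and matches the paper's approach exactly: the theorem is established there by the exhaustive computation of all $86\,472$ Weil polynomials via the Addington--Auel point counts plus Saito's sign criterion, followed by a direct check that $2P_X(-1)$ is a square in every case. (Your functional equation with $t^{22}$ is the correct normalization for a degree-$22$ monic polynomial; the exponent $11$ appearing in the paper's display~\eqref{eqn:functional} is evidently a typo.)
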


  \subsection{The $K3$-part of the Weil polynomial of a cubic fourfold}

If $(1-t)$ divides the Weil polynomial $P_X(t)$ of some cubic fourfold $X$, then $P_X(t)/(1-t)$ is a degree $21$ Weil polynomial, which we call the {\it $K3$-part} of the Weil polynomial of the cubic fourfold.

\begin{comp}
\label{comp:compare}
We compared the Weil polynomials $P_X(t)/(1-t)$ against the list
generated by Kedlaya--Sutherland~\cite[Computation~3(c)]{kedlaya_sutherland-census} of Weil polynomials of degree 21
which are of ``K3-type'', finding that there are $71\,476$
$K3$-type Weil polynomials which are the $K3$-part of the Weil
polynomial of some cubic fourfold over $\F_2$. 
\end{comp}

Computation~\ref{comp:compare} shows that cubic fourfolds realize
$K3$-type Weil polynomials that are not realized by any quartic $K3$
over $\F_2$; indeed, there are only $52\,755$ degree 21 Weil
polynomials arising from these quartic
surfaces~(\cite[Computation~4(c)]{kedlaya_sutherland-census}).

These $K3$-type Weil polynomials arising form cubic fourfolds are
sometimes (but not always) explained by the phenomenon of {\it
associated $K3$ surfaces} mentioned in the introduction. For instance,
whenever there is a geometric construction of an associated (twisted)
$K3$ surface defined over the ground field $k$, the $K3$-part of the
Weil polynomial of the cubic fourfold arises from an honest $K3$
surface over $k$. This is part of a more general phenomenon: whenever
there is a $k$-linear Fourier--Mukai equivalence between the $K3$
category of $X$ (as defined in~\cite{kuznetsov}, see
\cite{huybrechts}) and the derived category of (twisted) coherent
sheaves on a $K3$ surface, Fu and Vial~\cite{fu_vial} show that the
transcendental zeta functions of $S$ and $X$ agree (in fact, they have
isomorphic rational Chow motives). We refer the interested reader to
the excellent survey of Hassett~\cite{hassett-survey} for more on
associated $K3$ surfaces of cubic fourfolds.

\subsection{Newton polygons}

Having tabulated the zeta functions of the smooth cubic fourfolds over
$\F_2$, we can determine their Newton polygons. The Newton polygon of
a cubic fourfold over a finite field is determined by its height $h$,
which can be any integer $1 \le h \le 10$, or $h=\infty$. We find
cubic fourfolds over $\F_2$ of every possible height.

\begin{theorem}
Each Newton stratum in the moduli space of smooth cubic fourfolds contains $\F_2$-points.
\end{theorem}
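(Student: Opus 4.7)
The plan is purely computational and relies on our complete tabulation of zeta functions from \S\ref{ss:mainresultsonzeta}. For each smooth cubic fourfold $X/\F_2$ in our database, we already have the primitive Weil polynomial $P_X(t)$, which is a degree $22$ polynomial whose reciprocal roots are the Frobenius eigenvalues on $H^4_{\text{\'{e}t,prim}}(X_{\overline{\F}_2},\Q_\ell(2))$. First, I would un-normalize by passing back to $Q_X(t)$ so the eigenvalues $\alpha_1,\dots,\alpha_{22}$ have complex absolute value $q^2 = 4$; equivalently, I would work with the corresponding reciprocal polynomial over $\Z$ and compute the $2$-adic Newton polygon of its coefficient sequence. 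The slopes so obtained (a symmetric multiset in $[0,4]$, normalized to $[0,2]$ after dividing by the weight) determine the Newton polygon on the primitive middle cohomology.

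Second, I would extract the height $h$ from these slopes. The only nonunit slopes that can occur on the $K3$-like part come in the form $(1-\tfrac{1}{h}, 1, 1+\tfrac{1}{h})$ for some integer $1\le h \le 10$, corresponding to height $h$, while the fully supersingular Newton polygon has all slopes equal to $1$, giving $h=\infty$. Thus reading $h$ from the Newton polygon is a mechanical task given the factorization pattern of $P_X(t)$ modulo $2$, and does not require anything beyond the data we already possess.

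Third, I would sort the $1\,069\,562$ smooth cubics according to this invariant and report the distribution. The theorem is then proved by exhibiting, for each of the eleven possible values $h\in\{1,2,\dots,10,\infty\}$, at least one explicit cubic fourfold in our census achieving that height. In practice this is a single pass over the database; the $533\,262$ ordinary examples realize $h=1$ and the $8688$ supersingular examples realize $h=\infty$, and what remains is to verify that each intermediate height $h=2,\dots,10$ is also populated. The main (and only) obstacle is that no single stratum is guaranteed \emph{a priori} to be nonempty from purely dimension-theoretic considerations over $\F_2$, since the Newton stratification over $\overline{\F}_2$ could in principle fail to descend at such a small field; the point of the computation is precisely to rule out this possibility by producing witnesses.

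Finally, for documentation and to make the result checkable, I would include in the accompanying code repository~\cite{github} a script that loops over the orbit representatives, computes the Newton polygon from the stored Weil polynomial, and prints a representative cubic form for each height $h$, together with a histogram of the Newton stratification of $\mathcal{C}^{sm}(\F_2)$. No further theoretical input beyond the zeta function census of \S\ref{sec:zeta} is required.
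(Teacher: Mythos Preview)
Your proposal is correct and matches the paper's approach: the theorem is a direct computational consequence of the zeta function tabulation, with the height read off from the $2$-adic Newton polygon of each Weil polynomial, and the paper records the resulting distribution in Figure~\ref{fig:heights} showing that every $h\in\{1,\dots,10,\infty\}$ is populated. One minor normalization slip: for the weight~$4$ cohomology of a cubic fourfold the finite-height slopes are $2-\tfrac{1}{h}$, $2$, $2+\tfrac{1}{h}$ (not $1\pm\tfrac{1}{h}$), but this does not affect the substance of the argument.
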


A cubic fourfold is called {\it ordinary} if $h=1$ and {\it supersingular} if $h=\infty$. 

\begin{comp}
There are $8688$ supersingular cubic fourfolds and $533, 262$ ordinary cubic fourfolds up to isomorphism over $\F_2$.
\end{comp}

\begin{figure}[h]
{\small
  \[
    \begin{array}{c|ccccccccccc}
      h  &1 & 2 & 3 & 4& 5 & 6 & 7 & 8 & 9 & 10 & \infty \\[5pt] \hline  \\ [-1pt]
      \# &533262 & 267355 & 131922 & 66974 & 31806 & 16041 & 6901 & 4575 & 1301 & 737 & 8688 

    \end{array}
  \]
}
\caption{Heights of isomorphism classes of cubic fourfolds over $\F_2$  }
\label{fig:heights}
\end{figure}

\subsection{Codimension $2$ algebraic cycles on cubic fourfolds}

The Tate conjecture for $K3$ surfaces over finite fields of
characteristic~$2$ has been proved by
Ito--Ito--Koshikawa~\cite{ito_ito_koshikawa} and Kim--Madapusi
Pera~\cite{kim_madapusipera},~\cite{madapusipera-erratum}. We now
explain how methods used to prove these results, as well as results
for Gushel--Mukai varieties by Fu and Moonen~\cite{fu_moonen}, lead to
a proof of the Tate conjecture for codimension $2$ cycles on cubic
fourfolds over $\F_2$. Since this result is surely known to the
experts, we only provide a sketch of the proof to fill an existing gap
in the literature.

\begin{theorem}\label{thm:tateconj}
Let $X$ be a smooth cubic fourfold over a finite field $k$ of
characteristic~$2$. Then the cycle class map induces an isomorphism
\[
\CH^2(X) \otimes \Q_\ell \to H^4_{\mathrm{\acute{e}t}} (\overline{X},
\Q_\ell(2))^{\mathrm{Gal}(\overline{k}/k)}.
\]
\end{theorem}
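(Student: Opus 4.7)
The plan is to reduce the Tate conjecture for codimension~$2$ cycles on the cubic fourfold $X$ to the Tate conjecture for divisors on the Fano variety of lines $F = F_1(X)$, and then to reduce the latter, via a Kuga--Satake type construction, to the Tate conjecture for K3 surfaces (or for abelian varieties) in characteristic~$2$ as established by Kim--Madapusi Pera~\cite{kim_madapusipera} and Ito--Ito--Koshikawa~\cite{ito_ito_koshikawa}. This parallels the strategy of Fu--Moonen~\cite{fu_moonen} for Gushel--Mukai varieties, in which the role of $F_1(X)$ is played by the associated double EPW sextic.

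The first step is the Beauville--Donagi correspondence. For smooth $X$, the variety $F$ is a smooth projective fourfold of $K3^{[2]}$-deformation type (smoothness by Altman--Kleiman holds in arbitrary characteristic, and the deformation type is read off by lifting $X$ to characteristic zero). The universal line $Z \subset X \times_k F$ induces a Galois-equivariant isomorphism
\[
\H^4(X_{\overline{k}}, \Q_\ell)_{\mathrm{prim}}(1) \xrightarrow{\sim} \H^2(F_{\overline{k}}, \Q_\ell)_{\mathrm{prim}}
\]
realized by an algebraic cycle, and therefore identifying the image of $\CH^2(X) \otimes \Q_\ell$ with the image of $\Pic(F) \otimes \Q_\ell$, modulo the algebraic classes coming from the hyperplane class on $X$ and the tautological polarization on $F$. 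In particular, Tate for codimension~$2$ cycles on $X$ is equivalent to Tate for divisors on $F$.

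The second step is the Tate conjecture for divisors on the hyperk\"ahler variety $F$. The Kuga--Satake construction produces an abelian variety $A/k$ such that $\H^2_{\mathrm{prim}}(F, \Q_\ell)$ embeds as a Galois-stable summand of $\H^1(A_{\overline{k}}, \Q_\ell)^{\otimes 2}$; Tate for divisors on $F$ then follows from Faltings' theorem for $A$. Madapusi-Pera constructed the Kuga--Satake abelian variety for quasi-polarized K3 surfaces with good reduction, and its extension to characteristic~$2$ (addressing issues of superspecial primes and integral models) is precisely the content of~\cite{kim_madapusipera} and~\cite{ito_ito_koshikawa}. Since $F$ is of $K3^{[2]}$-type, the K3-type $\ell$-adic representation on $\H^2_{\mathrm{prim}}(F)$ carries the same Kuga--Satake datum as that of a K3 surface, so these results apply after the standard motivic translation.

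The main obstacle is the extension from K3 surfaces to the hyperk\"ahler setting in characteristic~$2$: one must verify that the Kuga--Satake period morphism for $K3^{[2]}$-type varieties inherits the integral refinements of~\cite{kim_madapusipera} and~\cite{ito_ito_koshikawa}, which is where the Fu--Moonen template~\cite{fu_moonen} enters. A secondary subtlety is justifying that the Beauville--Donagi correspondence is defined and algebraic over $k$ in characteristic~$2$; this follows from deformation-theoretic lifting of $X$ together with proper base change for $\ell$-adic cohomology, but requires care to ensure compatibility of cycle class maps with the specialization morphism.
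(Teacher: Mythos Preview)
Your route is genuinely different from the paper's. The paper works directly with the moduli stack $\mathscr{C}^{\mathrm{sm}}$ of smooth cubic fourfolds: it constructs a Kuga--Satake period map from (a double cover of) $\mathscr{C}^{\mathrm{sm}}$ to a $\Z_{\mathfrak{p}}$-integral model of the orthogonal Shimura variety attached to the primitive $H^4$-lattice, and proves this map is \'etale using the $2$-adic integral models of Kim--Madapusi Pera together with Levin's smoothness of $\mathscr{C}^{\mathrm{sm}}$ and the fact that its deformation theory is controlled by the degree~$4$ Hodge filtration. The paper also handles a parity issue you do not mention: the primitive $H^4$ has even rank~$22$, so $\mathrm{SO}$ contains $\pm\mathrm{id}$ and one must pass to a finite extension trivializing the resulting determinant character before the Shimura-variety machinery applies. (This subtlety is absent for K3 surfaces, where the primitive lattice has odd rank~$21$; it would reappear in your approach since $H^2_{\mathrm{prim}}(F)$ also has rank~$22$.)

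Your Beauville--Donagi reduction to Tate for divisors on $F$ is correct and is the Charles--Pirutka strategy. But the second step does not follow from the cited K3 theorems by any ``standard motivic translation'': those results input the \'etaleness of a period map from the moduli of \emph{K3 surfaces}, and to use them for $F$ one must build and analyze the analogous period map from a moduli of polarized $K3^{[2]}$-type fourfolds in characteristic~$2$, including the integral model and the de Rham comparison. That is exactly the work the paper carries out, only in the more tractable setting of $\mathscr{C}^{\mathrm{sm}}$ where the moduli and its deformation theory are explicit. Your final paragraph concedes this obstacle, so the proposal relocates the difficulty rather than resolving it; and the Fu--Moonen template you cite is for characteristic $p\geq 5$, so it does not supply the missing $2$-adic ingredients either.
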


\begin{proof}
We use Madapusi Pera's approach to proving the Tate conjecture for
codimension 2 cycles on smooth cubic fourfolds outlined in
\cite[\S5.14]{madapusipera}, together with the integral $2$-adic
models of Shimura varieties constructed in \cite{kim_madapusipera},
and the revised approach in \cite{madapusipera-erratum}, as adopted in
\cite{fu_moonen}.

Let $\mathscr{C}^{\mathrm{sm}}$ be the stack of smooth cubic fourfolds
over $\Z_{(2)}$.  Let $L \subset L'$ be the abstract lattice of the
primitive part inside the $H^4$ of a cubic fourfold.  Then $L$ is even
with signature $(20,2)$ and discriminant $3$ and $L'$ is odd
unimodular of signature $(21,2)$.  As in~\cite[\S5.14]{madapusipera},
let $\widetilde{\mathscr{C}}^{\mathrm{sm}} \to
{\mathscr{C}}^{\mathrm{sm}}$ be the double cover parameterizing cubic
fourfolds together with a choice of isomorphism $\det(L) \otimes
\Z_\ell \to \det(\langle h^2 \rangle^\perp)$, where $h^2 \in
H^4_{\text{\'{e}t}}(X_{k^s},\Z_\ell(2))$ is the cycle class of the
square of the hyperplane section.  Then by the arguments of
\cite[Proposition 4.15]{kim_madapusipera}, the classical Kuga--Satake
map extends to a morphism $\widetilde{\mathscr{C}}^{\mathrm{sm}} \to
\mathcal{S}(L)$, where $\mathcal{S}(L)$ is a $\Z_{\mathfrak{p}}$-model
of the orthogonal Shimura variety $\mathrm{Sh}(L)$ attached to $L$,
see~\cite[Theorem~3.10]{kim_madapusipera}. Here, one needs to take a
prime $\mathfrak{p}$ lying above $2$ in an extension $E/\Q$ (of degree
at most $2$) that trivializes the quadratic character induced by the
determinant-preserving Galois action on the primitive cohomology
$\langle h^2 \rangle^\perp$,
see~\cite[Remark~6.25~and~\S7.1]{fu_moonen}.  This subtlety, which
arises since the primitive cohomology has even rank, hence its special
orthogonal group contains $\pm \text{id}$, is not directly addressed
in~\cite[\S5.14]{madapusipera}.  However, to prove the Tate conjecture
for codimension 2 cycles on $X$, we are free to take a finite
extension of $k$, cf.\ \cite[\S~2,~p.~580]{totaro}, namely, the
residue field of $E_\mathfrak{p}$.

Following the strategy in \cite{madapusipera}, the main step is to
show that the map $\widetilde{\mathscr{C}}^{\mathrm{sm}} \to
\mathcal{S}(L)$ is \'etale.  We
remark that the de Rham realization of the universal lattice
$\bm{L}_{\mathrm{dR}} \subset \bm{L}'_{\mathrm{dR}}$ is a vector
subbundle since any polarization (having self-intersection 3) is
primitive.  Similarly, we appeal to
\cite[Lemma~1.10]{madapusipera-erratum} to show that the map induced on
de Rham realizations extends to an isometry to filtered vector bundles
\[
\alpha_{\mathrm{dR}} : \bm{L}_{\mathrm{dR}}(-2) \to \bm{H}_{\mathrm{prim},\mathrm{dR}}^4
\]
over $\widetilde{\mathscr{C}}^{\mathrm{sm}}$.  
The rest of the proof proceeds as in \cite[\S5.14]{madapusipera},
since $\widetilde{\mathscr{C}}^{\mathrm{sm}}$ is a smooth Artin stack,
as proved by Levin~\cite[\S3]{levin}
(cf.\ \cite[Proof~of~Theorem~5.15]{madapusipera}), 
and the deformation theory of $\widetilde{\mathscr{C}}^{\mathrm{sm}}$
is controlled by the degree 4 part of the Hodge filtration on
$\bm{H}_{\mathrm{prim},\mathrm{dR}}^4$.
\end{proof}

For a Weil polynomial $P(t)$ of a cubic fourfold $X$, we write $P(t)
=P_{\mathrm{cyc}}(t) P_{\mathrm{non-cyc}}(t)$ where
$P_{\mathrm{cyc}}(t)$ is the product of all cyclotomic factors of
$P(t)$. If $(t-1)^m$ exactly divides $P_{\mathrm{cyc}}(t)$ and
$\deg P_{\mathrm{cyc}}(t) = n$, then we have, as a direct consequence
of the Tate conjecture, that
\[
\rk \CH^2(X) = m \quad \text{and} \quad
\rk \CH^2(\overline{X}) = n
\]
Thus, we can report the following outcome from our computation of all zeta functions.

\begin{theorem}
If $X$ is an smooth cubic fourfold over $\F_2$, then the algebraic
rank $r=\rk \CH^2(X)$ can be any integer $1 \le r \le 10$, or
$r=12,16$ and, furthermore, there are ordinary cubic fourfolds of
every algebraic rank up to rank $10$.
\end{theorem}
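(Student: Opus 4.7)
The plan is to reduce the statement to a purely computational verification on the database of zeta functions assembled in \S\ref{ss:mainresultsonzeta}, using the Tate conjecture (Theorem~\ref{thm:tateconj}) as the bridge between algebraic ranks and multiplicities of cyclotomic factors of Weil polynomials. Recall that the cycle class map is $\mathrm{Gal}(\overline{\F}_2/\F_2)$-equivariant, and the $\mathrm{Gal}(\overline{\F}_2/\F_2)$-action on $H^4_{\mathrm{\acute{e}t}}(\overline{X}, \Q_\ell(2))$ has characteristic polynomial $P_X(t)$ on the primitive part plus one extra eigenvalue $1$ coming from the hyperplane-square class $h^2$. Therefore $\rk\CH^2(X) = m$, where $(t-1)^{m-1}$ is the exact power of $(t-1)$ dividing $P_X(t)$.

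First, I would enumerate over the $86\,472$ distinct Weil polynomials realized by smooth cubic fourfolds over $\F_2$ (Computation~\ref{comp:zetacount}) and, for each, factor out the cyclotomic part $P_{\mathrm{cyc}}(t)$ and record the exact multiplicity of the factor $(t-1)$. Adding one, this gives the corresponding algebraic rank $r$. Tabulating the resulting set of values and cross-referencing with the database of isomorphism classes yields the list of realized ranks $r \in \{1,2,\dots,10,12,16\}$ and also confirms that no other value occurs. The fact that the upper end skips $r=11,13,14,15$ but reaches $r=16$ is exactly the output of this tabulation; because the Weil polynomial has degree $22$ and its roots come in pairs $\{\alpha, 1/\alpha\}$ (Poincar\'e duality and the functional equation from~\S\ref{ss:zetamethods}), the possible multiplicities of $(t-1)$ are constrained, but which ones are actually realized is not a priori obvious and must be read off the census.

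For the second clause, I would intersect the list of smooth cubic fourfolds with the $533\,262$ ordinary ones identified in Figure~\ref{fig:heights}, and for each rank $1 \le r \le 10$ exhibit (by selecting from the database) at least one cubic fourfold in that rank stratum whose Newton polygon has height $h=1$. Since our database stores both the Weil polynomial and the Newton polygon for every smooth cubic fourfold, this intersection is immediate.

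The main obstacle is conceptual rather than computational: making sure the translation between eigenvalues of Frobenius on primitive cohomology and algebraic rank is carried out with the correct normalization. Specifically, one must be careful that $P_X(t)$ is the characteristic polynomial of Frobenius on $H^4_{\mathrm{\acute{e}t,prim}}(\overline{X},\Q_\ell(2))$ (the Tate twist places the Frobenius eigenvalues on the unit circle so that algebraic classes correspond to the eigenvalue $1$), and then to correctly add the contribution of $h^2 \in \CH^2(X)$, which is not primitive. Once this bookkeeping is set, the Tate conjecture of Theorem~\ref{thm:tateconj} converts the statement into a finite check that our code performs.
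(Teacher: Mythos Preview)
Your proposal is correct and follows essentially the same approach as the paper: the paper establishes the Tate conjecture (Theorem~\ref{thm:tateconj}) precisely so that $\rk\CH^2(X)$ can be read off as the multiplicity of $(t-1)$ in the Weil polynomial, and then simply reports the theorem as a direct outcome of tabulating the computed zeta functions (see Figure~\ref{fig:chow2}). Your explicit bookkeeping with the non-primitive class $h^2$ is a welcome clarification of a point the paper leaves implicit.
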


\begin{remark}
In fact, the extremal cubic fourfold $X_1$ in Remark~\ref{rem:DKHK} is
the unique smooth cubic fourfold over $\F_2$ with algebraic rank $16$.
\end{remark}

\begin{theorem}
If $X$ is an smooth cubic fourfold over $\F_2$ then the algebraic rank $\rk
\CH^2(\overline{X})$ can be any odd number $\le 23$, and all such
ranks $\le 21$ are realized by ordinary smooth cubic fourfolds.
\end{theorem}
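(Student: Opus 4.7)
The plan is to apply Theorem~\ref{thm:tateconj} to identify $\rk \CH^2(\overline{X})$ with the degree of the cyclotomic factor of the Weil polynomial on $H^4(\overline{X},\Q_\ell(2))$, prove by a parity argument that this degree is always odd, and then verify by exhaustive inspection of the zeta-function database from Section~\ref{ss:mainresultsonzeta} that every allowed odd value is realized.

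First, I would set up the reduction. By Theorem~\ref{thm:tateconj} together with the orthogonal decomposition $H^4 = \Q\langle h^2\rangle \oplus H^4_{\mathrm{prim}}$, the rank $\rk \CH^2(\overline{X})$ equals $1 + \deg (P_X)_{\mathrm{cyc}}$, where $P_X(t)$ is the degree-$22$ primitive Weil polynomial computed in Section~\ref{ss:zetamethods} and $(P_X)_{\mathrm{cyc}}$ is its cyclotomic factor. In particular $\rk \CH^2(\overline{X}) \le 23$. To see that the rank is always odd, I would argue on parity: $P_X(t)$ is real with all roots on the unit circle, so its non-real roots come in complex conjugate pairs, and the combined multiplicity of the only real unit-circle values $\pm 1$ has the same parity as $\deg P_X = 22$, hence is even. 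For $k \ge 3$, the cyclotomic factor $\Phi_k$ has even degree $\varphi(k)$, so it too contributes an even amount. Therefore $\deg (P_X)_{\mathrm{cyc}}$ is even and $\rk \CH^2(\overline{X})$ is odd.

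For realizability I would loop over the $1\,069\,562$ precomputed primitive Weil polynomials, factor each over $\Q$, extract the cyclotomic part, and tabulate the resulting rank, confirming that every odd integer in $\{1, 3, 5, \ldots, 23\}$ occurs. Restricting to the $533\,262$ ordinary cubics (those of height $h = 1$ in Figure~\ref{fig:heights}), I would perform the same tabulation and verify that each odd integer from $1$ to $21$ is attained. The cap at $21$ in the ordinary case is a theoretical constraint rather than a census artifact: if $\rk \CH^2(\overline{X}) = 23$, then all twenty-two Tate-twisted Frobenius eigenvalues on $H^4_{\mathrm{prim}}$ are roots of unity, forcing every Newton slope on $H^4_{\mathrm{prim}}$ to equal $2$ and hence $X$ to be supersingular ($h = \infty$), which is disjoint from the ordinary stratum. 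The main obstacle is therefore purely the bookkeeping check that no odd value in the required ranges is missed, which is routine given that the zeta functions have already been produced.
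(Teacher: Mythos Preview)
Your proposal is correct and takes essentially the same approach as the paper: invoke the Tate conjecture (Theorem~\ref{thm:tateconj}) to identify $\rk\CH^2(\overline{X})$ with the degree of the cyclotomic part of the Weil polynomial on $H^4$, and then read off the realized values from the zeta-function database (tabulated in Figure~\ref{fig:chow2geom}). Your a~priori parity argument and your observation that geometric rank~$23$ forces supersingularity are welcome elaborations---the paper leaves the oddness constraint to be observed empirically from the census and records the supersingular/rank-$23$ connection only in the remark following the theorem.
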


We remark that since the Tate conjecture holds for a supersingular
cubic fourfold $X$, the algebraic cycles $\CH^2(\overline{X})$
span $H^4_{\mathrm{\acute{e}t}}(\overline{X}, \Q_\ell(2))$ and so any supersingular
cubic fourfold has geometric rank $\rk \CH^2(\overline{X}) = 23$.

The tables below summarize our computations of the ranks of the algebraic and geometric Chow groups of smooth cubic fourfolds over $\F_2$.
\begin{figure}[h]
{\small
  \[
\renewcommand{\arraystretch}{1.1}
    \hspace*{-1.45cm}\begin{array}{c|cccccc}
      \rk \CH^2(\overline{X}) &1 & 3 & 5 & 7 & 9 & 11 \\
      \text{how many}&107552 & 254144 & 153410 & 179596 & 107911 & 98978\\[4pt] \hline \\[-9pt]
      \rk \CH^2(\overline{X}) & 13 & 15 & 17 & 19 & 21 & 23 \\
      \text{how many} & 61054 & 50777 & 27339 & 14588 & 5525 & 8688\\[-13pt] 
    \end{array}
  \]
}
\caption{Rank of the group of geometric cycles $\CH^2(\overline{X})$}
\label{fig:chow2geom}
\end{figure}
\begin{figure}[h]
{\small
  \[
  \renewcommand{\arraystretch}{1.1}
   \begin{array}{c|cccccccc} \setstretch{0.1}
      \rk \CH^2(X) &1 & 2 & 3 & 4& 5 & 6 & 7 & 8\\
      \text{how many}&232218 & 426619 & 273007 & 106035 & 25521 & 5377 & 581 & 178 \\[4pt] \hline \\[-9pt]
      \rk \CH^2(X) & 9 & 10 & 11 & 12 & 13 & 14 & 15 & 16 \\
      \text{how many} & 7 & 13 & 0 & 5 & 0 &0 & 0 & 1\\[-13pt]
    \end{array}
  \]
}
\caption{Rank of the group of algebraic cycles $\CH^2(X)$  }
\label{fig:chow2}
\end{figure}

\vspace*{1in}

\end{document}